\newtheorem{theorem}{Theorem}
\newtheorem{corollary}[theorem]{Corollary}
\newtheorem{lemma}[theorem]{Lemma}
\newtheorem{proposition}[theorem]{Proposition}
\def\Z{\mathbb Z}
\def\bei{\begin{itemize}}\def\eni{\end{itemize}}
\def\bee{\begin{enumerate}}\def\ene{\end{enumerate}}
\begin{document}

\title[Small integer symmetric matrices]{Integer symmetric matrices of small\\ spectral radius and small Mahler measure}
\author{ James McKee}
\address{Department of Mathematics\\
Royal Holloway, University of London\\
Egham Hill\\
Egham\\
Surrey TW20 0EX\\
UK}
\email{James.McKee@rhul.ac.uk}
\author{ Chris Smyth}
\address{School of Mathematics and Maxwell Institute for Mathematical Sciences\\
University of Edinburgh\\
James Clerk Maxwell Building\\
King's Buildings, Mayfield Road\\
Edinburgh EH9 3JZ\\
Scotland, U.K.} \email{C.Smyth@ed.ac.uk} \subjclass[2000]{11R06}
\date{}

\begin{abstract}
In a previous paper we completely described \emph{cyclotomic}
matrices---integer symmetric matrices of spectral radius at most
$2$. In this paper we find all minimal noncyclotomic matrices. As
a consequence, we are able to determine all integer symmetric
matrices of spectral radius at most $2.019$, and to determine all
integer symmetric matrices whose Mahler measure is at most $1.3$.
In particular we solve the strong version of Lehmer's problem for
integer symmetric matrices: all noncyclotomic matrices have Mahler
measure at least `Lehmer's number' $\lambda_0=1.17628\dots$\,\,.
\end{abstract}

\maketitle

\section{Statement of results}

For  a monic polynomial $g(x)$ with integer coefficients and
degree $d$, define $z^d g(z+1/z)$, a monic reciprocal polynomial
of degree $2d$, to be its \emph{associated reciprocal polynomial.}
If $g(z)$ has all its roots real and in the interval $[-2,2]$,
then the roots of its associated reciprocal polynomial are all of
modulus 1, and, by a theorem of Kronecker \cite{Kro},  it is a
cyclotomic polynomial. If $A$ is a $d$-by-$d$ symmetric matrix
with integer entries, then all the roots of its characteristic
polynomial are real algebraic integers, and we denote by
$R_{A}(z)$ its associated reciprocal polynomial.
If $A$ has spectral radius at most $2$, so that $R_A(z)$ is
cyclotomic, then we say that $A$ is a \emph{cyclotomic matrix}. In
a previous paper \cite{MScyc} we completely described cyclotomic
matrices.

We are interested in the spectrum of values taken by two different
functions of integer symmetric matrices $A$. The first is the
spectral radius. In this paper we find all integer symmetric
matrices whose spectral radius is less than $2.019$.

The second function we are interested in is the
 \emph{Mahler measure} of $A$, defined to be the Mahler measure
of $R_A(z)$, namely the product of the absolute values of all roots
of $R_A(z)$ that have modulus greater than 1. The Mahler measure of
$A$ equals 1 precisely when $A$ is a cyclotomic matrix.

Is there a (monic) polynomial with integer coefficients that has
Mahler measure $\lambda$ greater than 1 but such that every
polynomial with integer coefficients has Mahler measure either 1
or at least $\lambda$? This is Lehmer's famous problem. The
smallest known such Mahler measure greater than 1 is
\begin{equation}\label{eq:lam_0}
\lambda_0 = 1.176280818\ldots\,,
\end{equation}
the larger real root of the polynomial
\begin{equation*}\label{eq:pol}
z^{10} + z^9 - z^7 - z^6 - z^5 - z^4 - z^3 + z + 1\,.
\end{equation*}
Is there a Mahler measure in the open interval $(1,\lambda_0)$?

Restricting to certain classes of polynomials, the analogue of
Lehmer's problem may be more accessible.  For example, Borwein,
Dobrowolski and Mossinghoff \cite{BDM} settled the problem for
those polynomials that have only odd coefficients (the Mahler
measure is either $1$ or at least $1.4953\ldots$), and Dobrowolski
\cite{D} showed that if $A$ is an integer symmetric matrix, then
$R_A(z)$ has Mahler measure either 1 or at least
\[
\lambda_1 = 1.043\ldots\,\,.
\]

In this paper (Corollary \ref{C:main}), we use a completely
different approach to strengthen Dobrowolski's result and to show
that the Mahler measure of an integer symmetric matrix is either 1
or at least $\lambda_0$ given by (\ref{eq:lam_0}). This constant
is best possible here, as there are integer symmetric matrices
that have Mahler measure equal to $\lambda_0$, the smallest being
 \[ \left(
\begin{array}{ccccc}
0 & 1 & 0 & 0 & 0 \\
1 & 0 & 1 & 0 & 0 \\
0 & 1 & 0 & 1 & 1 \\
0 & 0 & 1 & 0 & 1 \\
0 & 0 & 1 & 1 & -1
\end{array}
\right) {\rm \ \ and\ \ } \left(
\begin{array}{ccccc}
1 & 1 & 0 & 0 & 0 \\
1 & -1 & 1 & 0 & 0 \\
0 & 1 & 0 & 1 & 0 \\
0 & 0 & 1 & 0 & 1 \\
0 & 0 & 0 & 1 & 1
\end{array}
\right)\,.
\]

Actually, these are the adjacency matrices of the charged signed
graphs $5u$ and $5N$ of Section \ref{S:minnon}.  In fact we go
further: in Theorem \ref{T:measure} we find all integer symmetric
matrices having Mahler measure less than $1.3$.

Adjacency matrices of graphs form a proper subset of the set of
all integer symmetric matrices: the entries are either 0 or 1,
with zeros on the diagonal. For these special matrices, the
analogue of Lehmer's problem was  solved in \cite[Corollary
10.1]{MS}, based on work of  Brouwer and Neumaier \cite{BN} and
Cvetkovi\'{c}, Doob, and Gutman \cite{CDG}. A key ingredient in
the current paper is an extension of this work on graphs to
cover signed graphs (allowing $-1$ as an off-diagonal matrix
entry) and charged signed graphs (allowing $\pm 1$ entries on the
diagonal). Matrices having any entry of modulus $2$ or more are
easily disposed of.

We now state our main theorem, the one from which our results on
the spectrum of the spectral radius and Mahler measure of integer
symmetric matrices are derived. The terms used in its statement
are defined in the next section.

\begin{theorem}\label{T:main}
Up to equivalence, the     minimal noncyclotomic integer symmetric
matrices are those catalogued below in Section \ref{S:minnon}. There
are $4$ infinite families and $125$ sporadic examples.
\end{theorem}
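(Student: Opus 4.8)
The plan is to build on the complete classification of cyclotomic charged signed graphs from \cite{MScyc}, with Cauchy interlacing as the engine throughout. By interlacing, deleting a vertex (that is, passing to a principal submatrix) cannot increase the spectral radius, so every principal submatrix of a cyclotomic matrix is again cyclotomic. Consequently a matrix is minimal noncyclotomic exactly when it is noncyclotomic but every matrix obtained by deleting a single row and the corresponding column is cyclotomic; equivalently, every proper principal submatrix is cyclotomic, since any such submatrix sits inside a single-vertex deletion. I first reduce the problem. As the spectral radius of a block-diagonal matrix is the maximum of the spectral radii of its blocks, a minimal noncyclotomic matrix must be connected. Moreover, any entry of modulus at least $2$ can be handled directly: a diagonal entry of modulus $\ge 3$, or a symmetric off-diagonal pair of modulus $\ge 3$, or a short configuration built around a modulus-$2$ entry, already forces the spectral radius above $2$ within a bounded submatrix. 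This isolates a short finite list of exceptional matrices and reduces the main work to connected charged signed graphs, whose off-diagonal entries lie in $\{-1,0,1\}$ (signed edges) and whose diagonal entries lie in $\{-1,0,1\}$ (charges).

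The core is a controlled growing procedure. If $G$ is minimal noncyclotomic and $v$ is any vertex, then $H = G - v$ is cyclotomic and hence appears in the classification of \cite{MScyc}; thus $G$ arises from some catalogued cyclotomic $H$ by adjoining one vertex $v$ carrying a charge in $\{-1,0,1\}$ together with signed edges to $H$. Working modulo the equivalences (vertex permutation, switching by conjugation with a diagonal $\pm 1$ matrix, and negation $A \mapsto -A$), I would enumerate these extensions, retaining only those that are noncyclotomic and for which every further single-vertex deletion returns a cyclotomic graph. Attaching a vertex to the finitely many sporadic cyclotomic graphs, and to bounded initial segments of the cyclotomic families, should produce the $125$ sporadic minimal noncyclotomic examples; attaching a vertex so as to continue an infinite cyclotomic family should produce the $4$ infinite families.

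The chief obstacle is that the cyclotomic graphs themselves occur in unbounded sizes, so a naive enumeration of all $H$ is not finite. The resolution is a localization argument for large $G$: I claim that if a minimal noncyclotomic $G$ is large, then the added vertex $v$ can only be attached within a bounded ``end'' region of the large cyclotomic graph $H$. Indeed, attaching $v$ to two vertices far apart along a cyclotomic family, or to too many vertices at once, produces a noncyclotomic configuration already inside a proper induced subgraph; since such a subgraph is contained in some single-vertex deletion $G - w$, interlacing forces $G - w$ to be noncyclotomic, violating minimality. This confines the attachment to finitely many patterns at the ends of each cyclotomic family, so that every large minimal noncyclotomic $G$ lies in a one-parameter family whose characteristic polynomial, and hence spectral radius, can be tracked in closed form as a function of the length; verifying that these are noncyclotomic and minimal yields exactly the $4$ infinite families. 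The complementary bounded range, namely connected charged signed graphs up to the size beyond which the localization takes over, is then settled by an exhaustive (computer-assisted) search, which reproduces precisely the $125$ sporadic matrices of Section \ref{S:minnon}. Both directions of the classification follow: every catalogued matrix is checked to be minimal noncyclotomic, and the growing-plus-localization argument shows every minimal noncyclotomic matrix is equivalent to a catalogued one.
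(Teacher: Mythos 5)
There is a genuine gap, and it is a structural misunderstanding of where the answer's two parts come from. You claim that the large-entry analysis ``isolates a short finite list of exceptional matrices'' and that the $4$ infinite families arise from one-parameter families of large charged signed graphs obtained by ``attaching a vertex so as to continue an infinite cyclotomic family.'' Both halves of this are wrong. In the paper the $4$ infinite families are precisely the $1\times 1$ and $2\times 2$ matrices with an unbounded \emph{entry} (a diagonal entry $n\ge 3$; matrices $\left(\begin{smallmatrix}2&a\\a&b\end{smallmatrix}\right)$, $\left(\begin{smallmatrix}1&a\\a&b\end{smallmatrix}\right)$, $\left(\begin{smallmatrix}0&a\\a&0\end{smallmatrix}\right)$ with $|a|$ unbounded); they come entirely from the modulus-$\ge 2$ case you dismiss as finite. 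Conversely, among genuine charged signed graphs (entries in $\{-1,0,1\}$) there are \emph{no} infinite families of minimal noncyclotomic examples: the largest has $10$ vertices. The paper's key localization result (Proposition \ref{P:degreebound}) says that if $G$ has at least $11$ vertices and every proper connected subgraph embeds in some $T_{2k}$, $C_{2k}^{++}$ or $C_{2k}^{+-}$, then $G$ itself embeds, hence is \emph{cyclotomic} --- the exact opposite of your expectation that such large $G$ form noncyclotomic one-parameter families to be ``verified minimal.'' If you carried out your plan you would find your one-parameter families empty and would have lost the actual infinite families in the large-entry step.

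Separately, your localization claim (``attaching $v$ to two vertices far apart, or to too many vertices, produces a noncyclotomic configuration inside a proper induced subgraph'') is only a heuristic and is not the statement that actually needs proving. The difficulty is not where $v$ attaches but whether a graph all of whose proper connected subgraphs embed in the toral/cylindrical families must itself embed. The paper resolves this with the profile machinery: uniqueness of the profile for path rank at least $5$ (Lemma \ref{L:cols}), a local characterization of embeddability by parity conditions on $4$-cycles (Proposition \ref{P:T-C-equivalences}), and a careful merging of the profiles of $G-\{x\}$ and $G-\{y\}$ for the two ends $x,y$ of a maximal chordless path or cycle. None of this is replaceable by the interlacing-only argument you sketch, and without some substitute for it the reduction to a finite search is not established.
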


This result generalises a theorem of Cvetkovi\'{c}, Doob, and
Gutman \cite{CDG}, who found all $18$ minimal noncyclotomic graphs
(i.e., restricting to integer symmetric matrices with $\{0,1\}$
entries and zeros on the diagonal). See also \cite[Theorem
2.3]{CR}. These $18$ graphs are precisely the graphs (i.e., the
charged signed graphs without charges or negatively signed edges)
that appear in our catalogue: $4a$, $4j$, $4B$, $5a$, $5E$, $6c$,
$6d$, $6h$, $7a$, $7b$, $8a$, $8b$, $8c$, $9a$, $9b$, $9d$, $10a$,
$10f$.

By an interlacing argument, we immediately obtain the following
corollary.

\begin{corollary}\label{C:main}
If $A$ is an integer symmetric matrix, then the Mahler measure of
$A$ is either $1$ or at least $\lambda_0$ given by
(\ref{eq:lam_0}).

Furthermore, if $A$ is indecomposable and has Mahler measure equal
to $\lambda_0$ then it is equivalent to the adjacency matrix of
one of the charged signed graphs $5u$, $5N$ or $10f$.
\end{corollary}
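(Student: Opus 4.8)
The plan is to combine Cauchy interlacing with the monotonicity of the map sending eigenvalues to Mahler-measure factors, and then to read off the numerical conclusions from the catalogue of Theorem \ref{T:main}. First I would record the right bookkeeping for the Mahler measure. For a real $\lambda$ with $|\lambda|>2$ the two roots of $z+1/z=\lambda$ are real, and the one of larger modulus is $M(\lambda):=\tfrac12\bigl(|\lambda|+\sqrt{\lambda^2-4}\bigr)>1$, a strictly increasing function of $|\lambda|$; when $|\lambda|\le 2$ the corresponding roots of $R_A(z)$ lie on the unit circle. Hence, if $\lambda_1,\dots,\lambda_n$ are the eigenvalues of $A$, the Mahler measure of $A$ is $\prod_{|\lambda_i|>2}M(\lambda_i)$. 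Now let $B$ be a $k$-by-$k$ principal submatrix of the $n$-by-$n$ matrix $A$, with decreasing eigenvalues $\beta_1\ge\cdots\ge\beta_k$ and $\alpha_1\ge\cdots\ge\alpha_n$. Cauchy interlacing gives $\alpha_i\ge\beta_i\ge\alpha_{i+n-k}$, so each eigenvalue of $B$ exceeding $2$ is dominated by a distinct $\alpha_i$ (top indices), and each eigenvalue of $B$ below $-2$ is dominated in modulus by a distinct $\alpha_{i+n-k}$ (bottom indices); these two index sets are disjoint unless $B=A$. Since every factor $M(\,\cdot\,)$ exceeds $1$, this yields $\mathrm{MM}(A)\ge \mathrm{MM}(B)$: the Mahler measure is monotone under passing to principal submatrices.

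For the first assertion, if $A$ is cyclotomic then $\mathrm{MM}(A)=1$. Otherwise $A$ is noncyclotomic, and choosing a noncyclotomic principal submatrix of minimal size produces a minimal noncyclotomic matrix $B$, which by Theorem \ref{T:main} is equivalent to one of the $4$ infinite families or $125$ sporadic matrices of Section \ref{S:minnon}; equivalence preserves the spectrum, hence both cyclotomicity and Mahler measure. By monotonicity $\mathrm{MM}(A)\ge \mathrm{MM}(B)$, so everything reduces to showing that the minimum Mahler measure over the entire catalogue equals $\lambda_0$. For the $125$ sporadic matrices this is a direct computation. For each of the $4$ infinite families one must show the Mahler measure is bounded below by $\lambda_0$ \emph{uniformly} in the size of the matrix (indeed it should grow with the size); this is the step requiring genuine care, since a family is not settled by a finite computation. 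Granting it, $\mathrm{MM}(A)\ge\lambda_0$.

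For the extremal (``furthermore'') assertion, suppose $A$ is indecomposable with $\mathrm{MM}(A)=\lambda_0$. Rerunning the above, $A$ contains a minimal noncyclotomic $B$ with $\mathrm{MM}(B)=\lambda_0$, and the catalogue computation identifies the minimal noncyclotomic matrices of Mahler measure exactly $\lambda_0$ as precisely $5u$, $5N$ and $10f$. Since $A$ is indecomposable its underlying charged signed graph is connected, so I would order the vertices outside $B$ as a chain $B=B_0\subset B_1\subset\cdots\subset B_m=A$, each step adjoining one vertex adjacent to the current set and so keeping every $B_i$ indecomposable. Monotonicity forces $\mathrm{MM}$ to be constant (equal to $\lambda_0$) along the chain, so if $m\ge 1$ then $B_1$ is an indecomposable one-vertex extension of one of $5u$, $5N$, $10f$ with $\mathrm{MM}(B_1)=\lambda_0$. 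Because any diagonal or off-diagonal entry of modulus $\ge 2$ already forces spectral radius, hence Mahler measure, well above $\lambda_0$, the new row and column have entries in $\{-1,0,1\}$, leaving only finitely many extensions; a finite check shows each has Mahler measure strictly greater than $\lambda_0$ (it cannot equal $1$, since it contains the noncyclotomic $B$). This contradiction gives $m=0$, so $A$ is equivalent to $B\in\{5u,5N,10f\}$.

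The main obstacle I anticipate is not the interlacing framework, which is routine, but the explicit verification that $\lambda_0$ is the exact minimum of the Mahler measure over the whole of Theorem \ref{T:main}---especially the uniform lower bound across the four infinite families---together with the finite one-vertex-extension computation that pins the extremal matrices down to $5u$, $5N$ and $10f$.
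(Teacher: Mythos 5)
Your proposal is correct and follows essentially the same route as the paper: Cauchy interlacing gives monotonicity of the Mahler measure under passing to principal submatrices, the first claim then reduces to reading off the minimum ($\lambda_0$, attained by $5u$, $5N$, $10f$) from the catalogue of Theorem \ref{T:main} (including the uniform bound $>1.722$ for the infinite families recorded in \S\ref{SS:minnoninfinite}), and the extremal clause is settled by the same one-vertex growing-and-checking computation the paper carries out in \S\ref{S:SMM}. The two computations you flag as outstanding are exactly the ones the paper performs (or asserts as readily checked), so there is no gap in the argument's structure.
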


For the spectral radius, this implies that if an integer symmetric matrix is
not cyclotomic then its spectral radius is at least
$\sqrt{\lambda_0}+1/\sqrt{\lambda_0}=2.00659\dots$\,\,. However,
we can build on Theorem \ref{T:main} to obtain the following
stronger results.

\begin{theorem}\label{T:spectral radius}
Up to equivalence, the indecomposable integer symmetric matrices
having spectral radius less than $2.019$ are either cyclotomic or
have spectral radius equal to one of the ten values given in Table
\ref{Ta-1}. The matrices having each such spectral radius are also
given in this table.
\end{theorem}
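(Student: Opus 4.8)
The plan is to deduce Theorem \ref{T:spectral radius} from the classification of minimal noncyclotomic matrices in Theorem \ref{T:main} together with interlacing (Cauchy's interlacing theorem). If $A$ is indecomposable and noncyclotomic, then $A$ has an eigenvalue of modulus exceeding $2$, so up to equivalence $A$ contains a principal submatrix that is minimal noncyclotomic, i.e.\ one of those catalogued in Section \ref{S:minnon}; and by interlacing the spectral radius of any principal submatrix is at most $\rho(A)$. Hence, to locate all $A$ with $\rho(A) < 2.019$, I would first compute the spectral radius of each of the $125$ sporadic minimal noncyclotomic matrices and analyse the four infinite families, retaining only those members with spectral radius below $2.019$; these are the admissible \emph{seeds}. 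Corollary \ref{C:main} already confines the noncyclotomic range to $[\sqrt{\lambda_0}+1/\sqrt{\lambda_0},\,2.019) = [2.00659\ldots,\,2.019)$, which keeps the list of seeds short.

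The second step is a growing argument. Matrices having an entry of modulus at least $2$ are disposed of directly, so I may assume all entries lie in $\{-1,0,1\}$, that is, that $A$ is the adjacency matrix of a charged signed graph. Starting from each seed, I would enumerate all supersets obtained by repeatedly adjoining one vertex (a new row and column with entries in $\{-1,0,1\}$), discarding at every stage any matrix whose spectral radius reaches $2.019$. Since interlacing makes the spectral radius nondecreasing along each such chain, a branch once pruned can never recover, and the matrices that survive are exactly the indecomposable $A$ with $2 < \rho(A) < 2.019$. Collecting the distinct values of $\rho(A)$ that occur, together with the matrices realising each, then yields Table \ref{Ta-1}.

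The crux, and the main obstacle, is to guarantee that this search is finite: one must exclude an infinite ascending chain of charged signed graphs that keeps its spectral radius below $2.019$. Equivalently, one must rule out an infinite connected noncyclotomic charged signed graph of spectral radius at most $2.019$, since the limit of such a chain would be such a graph. I would settle this by a limit-point analysis in the spirit of Hoffman, using the companion classification of cyclotomic matrices to describe the infinite connected charged signed graphs of spectral radius $2$ and then exhibiting a gap above $2$ containing no limit point of spectral radii below $2.019$ (the constant $2.019$ being chosen precisely to lie beneath the least such limit point). This bounds the size of the matrices that can survive, making the growing tree finite; the same analysis shows that the members of the four infinite families have spectral radius bounded away from $2$, with all but finitely many at least $2.019$, so those families contribute only finitely many seeds. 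The remaining verification—that each matrix listed in Table \ref{Ta-1} is indecomposable, has the stated spectral radius, and that exactly ten distinct values arise—is then routine, if computer-assisted.
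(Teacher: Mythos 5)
Your overall strategy coincides with the paper's: take as seeds the minimal noncyclotomic matrices of spectral radius below $2.019$ (after disposing of entries of modulus $\ge 2$, these are $9d$, $10d$, $10e$, $10f$), and grow them one vertex at a time, pruning any graph whose spectral radius reaches $2.019$; interlacing guarantees that every target graph is reached by such a chain of connected subgraphs. Where you diverge is on termination, and here your diagnosis of ``the crux'' is misplaced. The paper does not prove, and does not need, an a priori finiteness guarantee via a Hoffman-style limit-point analysis: the exhaustive growing computation simply terminates (the longest surviving chains end at the $18$-vertex graph $T_{1,2,14}$), and its termination \emph{is} the proof that no larger examples exist. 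Your proposed limit-point argument --- classifying the infinite connected charged signed graphs of spectral radius $2$ and showing $2.0198\ldots$ is the least limit point of spectral radii above $2$ for charged signed graphs --- is not carried out in your sketch and would be a substantial piece of work in its own right; the paper only knows the analogous facts for ordinary graphs (Brouwer--Neumaier, Cvetkovi\'c--Doob--Gutman, Shearer) and explicitly flags the extension to general integer symmetric matrices as open. The bound $2.019$ is chosen below the limit $2.019800887\ldots$ of $\rho(T_{1,2,n})$ so that termination is \emph{expected}, but that expectation plays no logical role in the proof. Conversely, you omit the one genuine theoretical ingredient of the paper's argument: the lemma that any vertex adjoined during the growing process may be assumed to have degree at most $4$. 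This is proved by observing that a vertex of degree $\ge 5$ would force a noncyclotomic subgraph on at most $5$ or $6$ vertices, hence a minimal noncyclotomic subgraph on at most $6$ vertices of spectral radius below $2.019$, contradicting the fact (read off from the catalogue) that no such minimal example has fewer than $9$ vertices. Strictly each growing step is finite even without this lemma, so its omission does not invalidate your plan, but it is what makes the search feasible and is the only non-computational content of the paper's proof of this theorem.
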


\begin{table}[ht]
\begin{center}
\bigskip\begin{tabular} { c | c | c}\label{Ta:spectral radius}

\# &Maximum modulus  & Charged signed graph having\\
 & of eigenvalues & corresponding spectral radius \\
     \hline
1 & 2.00659 & $10f=T_{1,2,6}$    \\
2 & 2.00960 & $10e$\\
3 & 2.01076 & $11c = T_{1,2,7}$\\
4 & 2.01348 & $10d$, $12b = T_{1,2,8}$\\
5 & 2.01532 & $9d = T_{1,3,4}$, $10g$, $11a$, $11b$, $13a = T_{1,2,9}$\\
6 & 2.01658 & $10h$, $14a = T_{1,2,10}$\\
7 & 2.01746 & $15a = T_{1,2,11}$\\
8 & 2.01809 & $16a = T_{1,2,12}$\\
9 & 2.01854 & $17a = T_{1,2,13}$\\
10 & 2.01887 & $12a$, $18a = T_{1,2,14}$\\

   \end{tabular}
    \vspace*{1ex}
    \caption{The noncyclotomic connected charged signed graphs whose eigenvalues are at most
    $2.019$ in modulus. The graphs are drawn in Sections \ref{S:minnon} and \ref{S:SSR}.} \label{Ta-1}
\end{center}\end{table}

\begin{picture}(300,180)(0,-20)
\put(90,0){Figure 1: The tree $T_{a,b,c}$.}

\multiput(70,30)(0,40){3}{\circle*{10}}
\multiput(100,30)(0,40){3}{\circle*{10}}
\multiput(160,30)(0,40){3}{\circle*{10}}
\put(220,70){\circle*{10}}
\multiput(70,30)(0,40){3}{\line(1,0){45}}
\multiput(145,30)(0,40){3}{\line(1,0){15}}
\multiput(125,27.5)(0,40){3}{$\cdots$}
\put(160,70){\line(1,0){60}} \put(160,30){\line(3,2){60}}
\put(160,110){\line(3,-2){60}}
\multiput(70,40)(0,40){3}{$\overbrace{\hspace{90pt}}$}
\put(112,50){$c$} \put(112,90){$b$} \put(112,130){$a$}
\end{picture}

The choice of $2.019$ for our bound was governed by the fact that
as $n\to\infty$ the spectral radius of the graph $T_{1,2,n}$
(Figure $1$) tends to
$\sqrt{\theta_0}+1/\sqrt{\theta_0}=2.019800887\dots$\,. Here
$\theta_0$ is the real root of $x^3-x-1$. Thus if our upper bound
were to be above $2.019800887\dots$ it would include infinitely
many integer symmetric matrices. For graphs  this was done by
Cvetkovi\'{c}, Doob, and Gutman \cite{CDG} and Brouwer and
Neumaier \cite{BN}, who in fact found all graphs of spectral
radius less than $\sqrt{2+\sqrt{5}}=2.058171027\dots$~.  See also
\cite[Theorem 2.4]{CR}. Furthermore Shearer \cite{She} showed that
the set of all spectral radii of graphs is dense in
$(\sqrt{2+\sqrt{5}},\infty)$. It would be nice to extend the
analysis for $(2,\sqrt{2+\sqrt{5}})$ to general integer symmetric
matrices.

For the Mahler measure, we have a corresponding result.

\begin{theorem}\label{T:measure}
Up to equivalence, the indecomposable integer symmetric matrices
having Mahler measure less than $1.3$ are either cyclotomic or
have Mahler measure equal to one of the sixteen values given in
Table \ref{Ta-2}. The matrices having each such Mahler measure are
also given in this table.
\end{theorem}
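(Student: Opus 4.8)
The plan is to run the same interlacing-and-growing strategy that yields Theorem~\ref{T:spectral radius}, but with the threshold $1.3$ in place of $2.019$ and with Mahler measure as the monotone quantity. The two ingredients are Theorem~\ref{T:main} (the complete list of minimal noncyclotomic matrices) and Cauchy interlacing. If $B$ is obtained from $A$ by deleting a row and the corresponding column, the eigenvalues of $B$ interlace those of $A$, so the outlying eigenvalues of $A$ dominate those of $B$ pairwise. Since the Mahler measure is the product of $|\beta|$ over the roots $\beta$ of $R_A(z)$ with $|\beta|>1$, and $|\beta|$ increases with $|\lambda|$ for $\lambda=\beta+1/\beta$ outside $[-2,2]$, interlacing gives $M(B)\le M(A)$. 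Thus Mahler measure is nondecreasing under passing to a containing matrix, exactly as spectral radius is in Corollary~\ref{C:main}.

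First I would reduce to a finite set of seeds. An indecomposable matrix with Mahler measure less than $1.3$ is either cyclotomic (measure $1$) or noncyclotomic; in the latter case it contains, up to equivalence, one of the minimal noncyclotomic matrices of Theorem~\ref{T:main} as a principal submatrix, and by the monotonicity above its measure is at least that of the submatrix. Hence it suffices to compute the Mahler measure of every minimal noncyclotomic matrix, discard those of measure $\ge 1.3$, and retain the rest as seeds. For the four infinite families one checks the behaviour of the measure along the family: for $T_{1,2,n}$ the spectral radius tends to $\sqrt{\theta_0}+1/\sqrt{\theta_0}$, and since the tree is bipartite its two outlying eigenvalues are $\pm\rho$, so its Mahler measure tends to $\theta_0=1.3247\ldots>1.3$ and only finitely many members lie below $1.3$. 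I would verify the analogous statement for the other three families. This is precisely why the threshold $1.3$, chosen below $\theta_0$, yields a finite problem.

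Next I would grow each seed. Adjoining one vertex (row and column) at a time in every inequivalent way, I compute the Mahler measure after each step; by interlacing the measure never decreases, so any branch reaching $\ge 1.3$ may be pruned, all its descendants having measure $\ge 1.3$ as well. Collecting every matrix encountered with measure $<1.3$, grouping them by their common measure, and checking indecomposability and completeness up to equivalence should produce the sixteen values of Table~\ref{Ta-2} together with the matrices realising them.

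The hard part will be proving that this search is genuinely finite and exhaustive rather than merely bounded numerically. The pruning rule kills a branch once it crosses $1.3$, but one must rule out infinite chains of enlargements whose measures increase toward a limit $\le 1.3$; this is where the classification of the four infinite families earns its keep, since any such unbounded chain would have to track one of them, and all of their measures exceed $1.3$ beyond a bounded index. The other source of difficulty is the equivalence bookkeeping---permutations of the basis together with sign changes of rows and columns and global negation---which must be handled carefully so that no admissible enlargement is overlooked and no matrix is double-counted. With these controlled, the remainder is a finite, if large, computation.
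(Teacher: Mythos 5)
Your overall strategy --- seed with the minimal noncyclotomic matrices of Mahler measure below $1.3$ supplied by Theorem \ref{T:main}, use interlacing to make the Mahler measure monotone under passing to principal submatrices, and grow one vertex at a time with pruning at $1.3$ --- is the paper's strategy in Section \ref{S:SMM}, and your monotonicity argument is correct. But your treatment of the finiteness of the seed set and of the termination of the search rests on a confusion. The four infinite families of minimal noncyclotomic matrices are the $1$-by-$1$ and $2$-by-$2$ matrices of \S\ref{SS:minnoninfinite}, all of Mahler measure at least $1.722$; none of them is a seed, and no limiting analysis along them is needed. The family whose measures creep up to $\theta_0=1.3247\ldots$ and which therefore threatens an infinite search is $T_{1,2,n}$, whose members are \emph{not} minimal noncyclotomic for $n\ge 7$ (only $T_{1,2,6}=10f$ is). So your claim that any unbounded ascending chain of measure below $1.3$ ``would have to track one of [the four infinite families]'' is a non sequitur and does not close the gap you yourself identify as the hard part. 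The paper does not argue termination abstractly: exhaustiveness comes from the breadth-first structure (any connected $n$-vertex example of measure $<1.3$ contains a connected $(n-1)$-vertex example of measure $<1.3$ still containing the seed, so once a level of the search produces nothing new the search is provably complete), and the computation in fact stops at $14$ vertices because the measures of $T_{1,2,n}$ cross $1.3$ after $n=10$.

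The second omission is the one substantive lemma in the paper's proof of this theorem: the adjoined vertex may be assumed to have degree at most $4$. This is established by a separate argument using the fact that no maximal cyclotomic charged signed graph has a vertex of degree exceeding $4$, an inspection showing that the only minimal noncyclotomic graph of measure $<1.3$ with at most five vertices that could sit inside such a high-degree configuration is $4I$, and explicit computer checks of the $2^4$ and $2^5$ ways of attaching the offending vertex. You also need the (easy, but unstated) observation that any entry of modulus at least $2$ forces a noncyclotomic submatrix of measure at least $1.722$, so that all matrices in the search have entries in $\{-1,0,1\}$ and each growth step really is a finite branching. Without these two ingredients your ``adjoin a vertex in every inequivalent way'' is not yet the finite, certifiably exhaustive procedure that the theorem requires.
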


\begin{table}[ht]
\begin{center}
\bigskip\begin{tabular} { c | c | c}\label{Ta:measure}

\# &Mahler measure  & Charged signed graph having\\
 & & corresponding Mahler measure \\
      \hline
1 & 1.17628 & $10f = T_{1,2,6}$, $5N$, $5u$   \\
2 & 1.18837 & $9e$\\
3 & 1.20003 & $8d$\\
4 & 1.21639 & $5M$, $5p$, $7c$, $10e$\\
5 & 1.21972 & $9g$\\
6 & 1.23039 & $11c = T_{1,2,7}$, $5L$, $6v$\\
7 & 1.23632 & $8e$\\
8 & 1.24073 & $6m$\\
9 & 1.25364 & $10c$\\
10 & 1.25622 & $9h$\\
11 & 1.26123 & $5F$, $5K$, $6t$, $6u$, $7d$, $10d$, $12b = T_{1,2,8}$\\
12 & 1.26730 & $8h$, $9i$, $10i$\\
13 & 1.28064 & $4I$, $5o$, $5x$, $6q$, $6r$, $6s$, $8f$, $9d$, $10g$, $11a$, $11b$, $13a = T_{1,2,9}$\\
14 & 1.28929 & $11d$\\
15 & 1.29349 & $5J$, $5O$, $7e$, $7f$, $7g$, $8g$, $9j$, $10h$, $14a = T_{1,2,10}$\\
16 & 1.29568 & $9f$\\
   \end{tabular}
    \vspace*{1ex}
    \caption{The noncyclotomic connected charged signed graphs whose Mahler measure is less than $1.3$. The graphs are drawn in Sections \ref{S:minnon} and \ref{S:SMM}.}
    \label{Ta-2}
\end{center}\end{table}

Concerning the choice of $1.3$ for the bound, we first note that
for a finite list of matrices we must have the bound less than
$\theta_0=1.324717957\dots$, the limit as $n\to\infty$ of the
Mahler measures of the sequence of graphs $T_{1,2,n}$. Also, Boyd
in his 1977 table of small Salem numbers \cite{Bo77} (later
slightly extended both by Boyd himself and by Mossinghoff) used
the bound $1.3$. See also \cite{S}. And all but two of the
reciprocal polynomials of the matrices in Table \ref{Ta-2} are
(apart from  possible cyclotomic factors) minimal polynomials of
Salem numbers: the exceptions are the (reciprocal polynomials of
the) charged signed graphs $10c$ and $11d$.  For the extended
table see \cite{Moss}. Of the $47$ Salem numbers, our result shows
that only $14$ of them are Mahler measures of integer symmetric
matrices.

In \cite{MS} we in fact showed that  from  \cite{BN} and
\cite{CDG} mentioned above one could describe all graphs having
Mahler measure less than
$\frac{1}{2}(1+\sqrt{5})=1.61803\dots$\,\,. Again, it would be
nice to improve Theorem \ref{T:measure}  by increasing the bound
$1.3$ up to this value.

In \cite{D} it was shown, contrary to a conjecture of Estes and
Guralnick \cite{EG}, that there are infinitely many totally real
algebraic integers whose minimal polynomial is not the
characteristic polynomial of an integer symmetric matrix.  The
counterexamples in \cite{D} were all cyclotomic: the conjugates of
the real algebraic integers being all in the interval $[-2,2]$.
Comparing our Table \ref{Ta-2} with the tables of small Salem
numbers in \cite{Bo77} and \cite{Moss}, we find some noncyclotomic
counterexamples to the conjecture of Estes and Guralnick: for
example, if $x^7-8x^5+19x^3-12x+1$ were the characteristic
polynomial of an integer symmetric matrix $A$, then $R_A(z)$ would
be $z^{14}-z^{12}+z^7-z^2+1$, and the Mahler measure of $A$ would
be $1.20261\dots$, but this does not appear in Table \ref{Ta-2}.

\section{Some definitions}
A \emph{permutation} of a $d$-by-$d$ matrix $A=(a_{ij})$ is a
matrix $P^TAP$, where $P$ is a $d$-by-$d$ permutation matrix.
More generally, a \emph{signed permutation} of $A=(a_{ij})$ is a matrix $P^TAP$, where $P$ is a
$d$-by-$d$ \emph{signed permutation matrix}, that is an element of
the group $O_d(\Z)$ of orthogonal matrices with integer entries
(which must be $0$ or $\pm 1$). If such a $P$ is a diagonal
matrix, we obtain a \emph{switching} of $A$. In other words, to
perform a switching we change the signs of some subset of the
rows, and then change the signs of the same subset of the columns.
Note that diagonal entries are unchanged by a switching. In
particular, {\it switching a vertex} means changing the signs of
all edges incident at that vertex.

Two $d$-by-$d$ integer symmetric matrices $A$ and $B$ will be
called \emph{equivalent} if $B=\pm P^TAP$ for some $P\in O_d(\Z)$.
Since a signed permutation matrix is a product of a permutation
matrix and a diagonal matrix, we see then that  one can be
transformed to the other by a permutation and a switching, and
then possibly a change of sign of all the entries. Equivalent
matrices have the same spectral radius and the same Mahler measure. Applying
a permutation or a switching does not change the eigenvalues;
changing the signs of all entries changes the signs of all
eigenvalues. If the matrices are restricted to being the adjacency
matrices of signed graphs, then our equivalence classes almost
correspond to the `signed switching classes' of Cameron et
al.~\cite{Cetal}, except that we allow a change of sign of all the
edges: our class is the union of one or two signed switching
classes.

Any $d$-by-$d$ integer symmetric matrix $A=(a_{ij})$ can be viewed
as the adjacency matrix of a signed graph with charges: take $d$
vertices, and for $i\ne j$ put $|a_{ij}|$  edges with the same
sign as $a_{ij}$ joining vertex $i$ and vertex $j$. At the $i$th
vertex place a charge of $a_{ii}$. The matrix $A$ is called
\emph{indecomposable} if this graph is connected. If $A$ is not
indecomposable, then it is \emph{decomposable}: this implies that
some permutation of $A$ is in block-diagonal form, with more than
one block, and the eigenvalues of $A$ are obtained by pooling the
eigenvalues of its blocks. The spectral radius of $A$ is clearly
the maximum spectral radius of its blocks, while the Mahler
measure of $A$ is the product of the Mahler measures of its
blocks. For our purposes it is therefore sufficient to consider
indecomposable matrices.

Whenever we refer to a \emph{subgraph}, we shall mean one induced
by a subset of its vertices.  A \emph{submatrix} of a square
matrix $A$ is one that is obtained from $A$ by deleting a subset
of the rows, and deleting the same subset of the columns. With the
graphical interpretation, a submatrix is the adjacency matrix of a
subgraph.

Complementary to our definition of a cyclotomic matrix, we say
that an integer symmetric matrix is \emph{noncyclotomic} if its
spectral radius is greater than $2$.  A $d$-by-$d$ integer
symmetric matrix is \emph{minimal noncyclotomic} if it is
noncyclotomic and every $(d-1)$-by-$(d-1)$ submatrix is
cyclotomic. A minimal noncyclotomic matrix is necessarily
indecomposable.

We shall need the interlacing theorem of Cauchy (see
\cite[Th\'eor\`eme I, p.187]{Ca}, quoted in \cite[pp.59,
78]{Bh96}; see also \cite{F} for a short proof):
\begin{lemma}
Let $A$ be a $d$-by-$d$ integer symmetric matrix, with eigenvalues
$\lambda_1 \le \lambda_2 \le \cdots \le \lambda_d$. If $B$ is any
$(d-1)$-by-$(d-1)$ principal submatrix of $A$, with eigenvalues
$\mu_1\le \mu_2\le\cdots\le\mu_{d-1}$, then the eigenvalues of $A$
and $B$ interlace:
\[
\lambda_1 \le \mu_1 \le \lambda_2 \le \mu_2 \le \cdots
\le \lambda_{d-1}\le\mu_{d-1}\le\lambda_d\,.
\]
\end{lemma}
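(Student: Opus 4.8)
The plan is to derive the interlacing inequalities from the Courant--Fischer min-max characterisation of the eigenvalues of a real symmetric matrix. First I would normalise the situation: since a permutation $P^{T}AP$ leaves the eigenvalues of $A$ unchanged and merely relabels its principal submatrices, there is no loss in assuming that $B$ is obtained from $A$ by deleting the last row and the last column. With this choice $B$ represents the restriction of the quadratic form $x\mapsto x^{T}Ax$ to the hyperplane $W=\{x\in\R^{d}:x_{d}=0\}$, in the sense that $y^{T}By=x^{T}Ax$ whenever $x=(y,0)$ with $y\in\R^{d-1}$.

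Next I would invoke Courant--Fischer, which for $A$ and each $k$ gives the two dual formulas
\[
\lambda_{k}=\min_{\substack{V\subseteq\R^{d}\\\dim V=k}}\,\max_{0\ne x\in V}\frac{x^{T}Ax}{x^{T}x}
=\max_{\substack{V\subseteq\R^{d}\\\dim V=d-k+1}}\,\min_{0\ne x\in V}\frac{x^{T}Ax}{x^{T}x},
\]
together with the identical formulas for the $\mu_{k}$ in which the subspaces $V$ instead range over subspaces of $W\cong\R^{d-1}$. The interlacing then drops out by comparing feasible sets of subspaces. For the lower bound $\lambda_{k}\le\mu_{k}$ I use the min-max form: every $k$-dimensional $V\subseteq W$ is in particular a $k$-dimensional subspace of $\R^{d}$, so restricting the outer minimum to such $V$ can only increase its value, whence $\mu_{k}\ge\lambda_{k}$. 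For the upper bound $\mu_{k}\le\lambda_{k+1}$ I use the max-min form: $\mu_{k}$ is the maximum of $\min_{V}$ over $(d-k)$-dimensional subspaces $V\subseteq W$, while $\lambda_{k+1}$ is the same maximum taken over all $(d-k)$-dimensional subspaces of $\R^{d}$; enlarging the feasible set can only increase the maximum, so $\mu_{k}\le\lambda_{k+1}$. Combining these for $k=1,\dots,d-1$ yields $\lambda_{k}\le\mu_{k}\le\lambda_{k+1}$, which is exactly the asserted chain of inequalities.

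For this classical statement there is no genuine obstacle: the entire content is the Courant--Fischer theorem plus the elementary observation that passing from $W$ to $\R^{d}$ enlarges the class of competing subspaces. The only points demanding care are keeping the directions of the inequalities straight and matching the correct index shift, so that the restriction argument produces $\lambda_{k+1}$ rather than $\lambda_{k}$ as the upper bound for $\mu_{k}$. Should a self-contained argument avoiding min-max be preferred, one could instead perturb $A$ by the bordering row and column and track the sign changes of the characteristic polynomial $\det(xI-A)$ across the roots of $\det(xI-B)$; this works but is more delicate than the subspace comparison above, so I would favour the Courant--Fischer route.
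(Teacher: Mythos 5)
Your proof is correct: the Courant--Fischer comparison is carried out with the right index bookkeeping (a $k$-dimensional subspace of the hyperplane $W$ competes in the min-max formula for $\lambda_k$, giving $\lambda_k\le\mu_k$, and a $(d-k)$-dimensional subspace of $W$ competes in the max-min formula for $\lambda_{k+1}$, giving $\mu_k\le\lambda_{k+1}$), and the reduction to the case where the deleted row and column are the last ones is legitimate since conjugation by a permutation matrix preserves the spectrum. There is nothing in the paper to compare against: the authors do not prove this lemma but simply cite Cauchy's original memoir, Bhatia's book, and Fisk's short note for a proof, so your argument stands as a self-contained replacement. It is worth noting that Fisk's cited proof proceeds along the lines of the alternative you sketch at the end (tracking eigenvalues of the bordered matrix rather than comparing subspaces), but the min-max route you chose is equally standard and arguably cleaner; either suffices for the interlacing inequalities as stated.
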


By interlacing, any noncyclotomic integer symmetric matrix $A$
contains a minimal noncyclotomic submatrix $M$ (perhaps more than
one). Moreover, interlacing shows that the Mahler measure of $A$
is at least that of $M$. We see that Corollary \ref{C:main}
follows from Theorem \ref{T:main}.

All of the terms in the statement of Theorem \ref{T:main} have now
been defined. To prove the theorem, we shall need a good
description of all \emph{maximal cyclotomic} indecomposable
integer symmetric matrices $A$, i.e., those indecomposable
cyclotomic integer symmetric $A$ such that if $A$ is a submatrix
of $B$ and $B$ is cyclotomic and indecomposable then $A=B$.

\section{Maximal cyclotomic integer symmetric
matrices}\label{S:maxcyc}

We recall from \cite{MScyc} the maximal indecomposable cyclotomic
integer symmetric matrices (up to equivalence). There are seven
sporadic examples, $S_1$, $S_2$, $S_7$, $S_8$, $S_8'$, $S_{14}$,
$S_{16}$, and three infinite families $T_{2k}$, $C_{2k}^{++}$,
$C_{2k}^{+-}$.

The first two sporadic examples are
\[
S_1 = (2)\,,
\]
\[
S_2=\left(\begin{array}{cc} 0 & 2 \\ 2 & 0 \end{array}\right).
\]
The remaining examples have all entries in $\{-1,0,1\}$, so are
conveniently (and more compactly) represented by charged signed
graphs, with the following conventions: (i) positive edges are
represented by solid lines -----------; (ii) negative edges are
represented by dotted lines
\begin{picture}(30,10)\multiput(5,5)(5,0){5}{\circle*{2}}\end{picture};
(iii) neutral vertices ($0$ on the leading diagonal) are
represented by a solid disc
\begin{picture}(15,10)\put(5,3){\circle*{10}}\end{picture};
(iv) vertices with a positive charge ($+1$ on the   diagonal)
are represented by \textcircled{+}; (v) vertices with a negative
charge ($-1$ on the diagonal) are represented by \textcircled{--}.
They are shown in the pictures below, taken from \cite{MScyc}.

\begin{picture}(310,105)

\put(0,95){The three sporadic maximal cyclotomic charged signed
graphs:}

\put(-5,-3.5){\textcircled{+}} \put(0,60){\circle*{10}}
\put(15,16.5){\textcircled{$-$}} \put(15,76.5){\textcircled{+}}
\put(60,0){\circle*{10}} \put(75,16.5){\textcircled{+}}
\put(80,80){\circle*{10}}

\multiput(8,0)(5,0){10}{\circle*{2}} \put(3,3){\line(1,1){14}}
\put(0,3.5){\line(0,1){57}} \put(0,60){\line(1,-1){60}}
\put(0,60){\line(4,1){80}} \multiput(0,60)(3,3){6}{\circle*{2}}
\put(25,20){\line(1,0){50.5}} \put(20,24){\line(0,1){50.5}}
\put(25,80){\line(1,0){55}} \put(60,0){\line(1,1){17}}
\put(60,0){\line(1,4){20}} \multiput(80,27)(0,5){10}{\circle*{2}}

\put(45,45){$S_7$}

\put(105,-3.5){\textcircled{$-$}} \put(105,56.5){\textcircled{+}}
\put(125,16.5){\textcircled{+}} \put(125,76.5){\textcircled{$-$}}
\put(165,-3.5){\textcircled{+}} \put(165,56.5){\textcircled{$-$}}
\put(185,16.5){\textcircled{$-$}} \put(185,76.5){\textcircled{+}}

\put(150,40){$S_8$}

\multiput(118,0)(5,0){10}{\circle*{2}} \put(113,3){\line(1,1){14}}
\put(110,3.5){\line(0,1){51}} \put(115,60){\line(1,0){50.5}}
\multiput(115,65)(3,3){4}{\circle*{2}}
\put(135,20){\line(1,0){50.5}} \put(130,23.5){\line(0,1){51}}
\put(135,80){\line(1,0){50.5}} \put(173,3){\line(1,1){14}}
\put(170,3.5){\line(0,1){51}} \put(173,63){\line(1,1){14}}
\multiput(190,26)(0,5){10}{\circle*{2}}

\put(215,-3.5){\textcircled{$-$}} \put(215,56.5){\textcircled{+}}
\put(240,20){\circle*{10}} \put(240,80){\circle*{10}}
\put(280,0){\circle*{10}} \put(280,60){\circle*{10}}
\put(295,16.5){\textcircled{+}} \put(295,76.5){\textcircled{$-$}}

\put(260,40){$S_8'$}

\put(225,0){\line(1,0){55}} \put(223,3){\line(1,1){17}}
\put(220,3.5){\line(0,1){51}} \put(225,60){\line(1,0){55}}
\multiput(225,65)(3,3){5}{\circle*{2}}
\put(240,20){\line(2,-1){40}}
\multiput(248,20)(5,0){10}{\circle*{2}}
\put(240,20){\line(0,1){60}} \put(240,80){\line(2,-1){40}}
\put(240,80){\line(1,0){55.5}} \put(280,0){\line(1,1){17}}
\multiput(280,7)(0,5){10}{\circle*{2}}
\put(280,60){\line(1,1){17}} \put(300,23.5){\line(0,1){51}}

\end{picture}

\begin{picture}(220,250)(-70,0)

\put(-70,200){The $14$-vertex sporadic maximal cyclotomic signed
graph $S_{14}$:}

\put(6,63){\circle*{10}} \put(24,144){\circle*{10}}
\put(60,0){\circle*{10}} \put(96,183){\circle*{10}}
\put(132,0){\circle*{10}} \put(168,144){\circle*{10}}
\put(186,63){\circle*{10}}

\put(103.8,144){\circle*{10}} \put(147.75,63){\circle*{10}}
\put(146.625,115.5){\circle*{10}} \put(112.36,26.18){\circle*{10}}
\put(66.5,32.7){\circle*{10}} \put(40,80){\circle*{10}}
\put(55.1,128.4){\circle*{10}}

\put(6,63){\line(2,-1){60.5}} \put(6,63){\line(1,0){180}}
\put(6,63){\line(2,1){162}} \multiput(6,63)(3,4){16}{\circle*{2}}
\put(24,144){\line(1,-4){16}} \put(24,144){\line(3,-4){108}}
\put(24,144){\line(2,-1){162}}
\multiput(24,144)(5,0){16}{\circle*{2}} \put(60,0){\line(2,1){52}}
\put(60,0){\line(3,4){108}} \put(60,0){\line(1,5){36}}
\multiput(60,0)(-1.25,5){16}{\circle*{2}}
\put(96,183){\line(-3,-4){40.9}} \put(96,183){\line(1,-5){36}}
\multiput(96,183)(3,-4){17}{\circle*{2}}
\put(132,0){\line(1,4){15.7}}
\multiput(132,0)(-4,2){16}{\circle*{2}}
\put(168,144){\line(-1,0){64.5}}
\multiput(168,144)(-1.25,-5){16}{\circle*{2}}
\put(186,63){\line(-3,4){39.4}}
\multiput(186,63)(-4,-2){19}{\circle*{2}}

\end{picture}

\begin{picture}(200,180)(-80,0)
\put(-80,130){The hypercube sporadic maximal cyclotomic signed
graph $S_{16}$:}

\multiput(0,0)(50,0){2}{\circle*{10}}
\multiput(0,50)(50,0){2}{\circle*{10}}
\multiput(20,20)(50,0){2}{\circle*{10}}
\multiput(20,70)(50,0){2}{\circle*{10}}
\multiput(90,45)(50,0){2}{\circle*{10}}
\multiput(90,95)(50,0){2}{\circle*{10}}
\multiput(110,65)(50,0){2}{\circle*{10}}
\multiput(110,115)(50,0){2}{\circle*{10}}

\put(0,0){\line(1,0){50}} \multiput(0,0)(4,2){23}{\circle*{2}}
\put(0,0){\line(1,1){20}} \put(0,0){\line(0,1){50}}
\multiput(0,50)(5,0){10}{\circle*{2}} \put(0,50){\line(2,1){90}}
\put(0,50){\line(1,1){20}} \put(20,20){\line(1,0){50}}
\put(20,20){\line(2,1){90}} \multiput(20,20)(0,5){10}{\circle*{2}}
\put(20,70){\line(1,0){50}} \put(20,70){\line(2,1){90}}
\put(50,0){\line(2,1){90}} \multiput(50,0)(3,3){6}{\circle*{2}}
\put(50,0){\line(0,1){50}} \put(50,50){\line(2,1){90}}
\put(50,50){\line(1,1){20}} \put(70,20){\line(2,1){90}}
\put(70,20){\line(0,1){50}} \multiput(70,70)(4,2){23}{\circle*{2}}
\put(90,45){\line(1,0){50}} \put(90,45){\line(1,1){20}}
\put(90,45){\line(0,1){50}} \put(90,95){\line(1,0){50}}
\multiput(90,95)(3,3){6}{\circle*{2}}
\multiput(110,65)(5,0){10}{\circle*{2}}
\put(110,65){\line(0,1){50}} \put(110,115){\line(1,0){50}}
\put(140,45){\line(1,1){20}}
\multiput(140,45)(0,5){10}{\circle*{2}}
\put(140,95){\line(1,1){20}} \put(160,65){\line(0,1){50}}

\end{picture}

\begin{picture}(300,150)
\put(0,130){The family $T_{2k}$ of $2k$-vertex maximal cyclotomic
toral tesselations,} \put(-25,117){for $k\ge3$ (the two copies of
vertices $A$ and $B$ should be identified):}

\put(0,87){$A$} \put(0,25){$B$} \put(310,87){$A$}
\put(310,25){$B$}

\multiput(5,40)(40,0){5}{\circle*{10}}
\multiput(5,80)(40,0){5}{\circle*{10}}
\multiput(235,40)(40,0){3}{\circle*{10}}
\multiput(235,80)(40,0){3}{\circle*{10}}

\multiput(5,40)(5,0){35}{\circle*{2}}
\multiput(225,40)(5,0){18}{\circle*{2}}
\put(5,80){\line(1,0){170}} \put(225,80){\line(1,0){90}}
\multiput(5,80)(40,0){4}{\line(1,-1){40}}
\multiput(235,80)(40,0){2}{\line(1,-1){40}} \put(195,57){$\cdots$}
\multiput(5,40)(3,3){13}{\circle*{2}}
\multiput(45,40)(3,3){13}{\circle*{2}}
\multiput(85,40)(3,3){13}{\circle*{2}}
\multiput(125,40)(3,3){13}{\circle*{2}}
\multiput(235,40)(3,3){13}{\circle*{2}}
\multiput(275,40)(3,3){13}{\circle*{2}}

\put(45,90){$\overbrace{\hspace{230pt}}$} \put(148,100){$k-1$}

\put(165,80){\line(1,-1){10}} \put(235,40){\line(-1,1){10}}
\multiput(165,40)(3,3){4}{\circle*{2}}
\multiput(235,80)(-3,-3){4}{\circle*{2}}
\end{picture}

\begin{picture}(300,140)

\put(0,120){$T_6$ is a signed octahedron:}

\multiput(90,20)(120,0){2}{\circle*{10}}
\multiput(150,44)(0,76){2}{\circle*{10}}
\multiput(136.7,66.7)(26.6,0){2}{\circle*{10}}

\put(90,20){\line(1,0){120}}
\multiput(90,20)(5,2){12}{\circle*{2}}
\put(90,20){\line(1,1){46.7}} \put(90,20){\line(3,5){60}}
\multiput(136.7,66.7)(2,-3.4){8}{\circle*{2}}
\multiput(136.7,66.7)(5,0){6}{\circle*{2}}
\multiput(136.7,66.7)(1.25,5){12}{\circle*{2}}
\put(150,44){\line(5,-2){60}}
\multiput(150,44)(2,3.4){8}{\circle*{2}}
\put(150,120){\line(1,-4){13.3}} \put(150,120){\line(3,-5){60}}
\multiput(163.3,66.7)(3,-3){15}{\circle*{2}}
\end{picture}

\begin{picture}(300,190)
\put(0,170){The families of $2k$-vertex cyclotomic cylindrical
tesselations $C_{2k}^{++}$} \put(-25,157){and $C_{2k}^{+-}$, for
$k\ge 2$:}

\put(75,140){$\overbrace{\hspace{190pt}}$} \put(158,150){$k-2$}

\multiput(30,26.5)(0,40){2}{\textcircled{+}}
\multiput(30,86.5)(0,40){2}{\textcircled{+}}
\multiput(75,30)(40,0){4}{\circle*{10}}
\multiput(75,70)(40,0){4}{\circle*{10}}
\multiput(75,90)(40,0){4}{\circle*{10}}
\multiput(75,130)(40,0){4}{\circle*{10}}
\multiput(265,30)(0,40){2}{\circle*{10}}
\multiput(265,90)(0,40){2}{\circle*{10}}
\multiput(300,26.5)(0,40){2}{\textcircled{$-$}}
\multiput(300,86.5)(0,40){2}{\textcircled{+}}

\multiput(42,30)(5,0){34}{\circle*{2}}
\put(40,70){\line(1,0){165}}
\multiput(42,90)(5,0){34}{\circle*{2}}
\put(40,130){\line(1,0){165}}
\multiput(255,30)(5,0){10}{\circle*{2}}
\put(255,70){\line(1,0){46}}
\multiput(255,90)(5,0){10}{\circle*{2}}
\put(255,130){\line(1,0){46}} \put(225,47){$\cdots$}
\put(225,107){$\cdots$}

\multiput(35,33.5)(0,60){2}{\line(0,1){31}}
\put(305,33.5){\line(0,1){31}}
\multiput(305,96)(0,5){6}{\circle*{2}}

\put(11,45){$C_{2k}^{+-}$} \put(11,105){$C_{2k}^{++}$}

\multiput(39,66)(0,60){2}{\line(1,-1){37}}
\multiput(75,70)(40,0){3}{\line(1,-1){40}}
\multiput(75,130)(40,0){3}{\line(1,-1){40}}
\multiput(195,70)(0,60){2}{\line(1,-1){10}}
\multiput(255,40)(0,60){2}{\line(1,-1){10}}
\multiput(265,70)(0,60){2}{\line(1,-1){37}}

\multiput(40,35)(3,3){13}{\circle*{2}}
\multiput(40,95)(3,3){13}{\circle*{2}}
\multiput(75,30)(3,3){13}{\circle*{2}}
\multiput(75,90)(3,3){13}{\circle*{2}}
\multiput(115,30)(3,3){13}{\circle*{2}}
\multiput(115,90)(3,3){13}{\circle*{2}}
\multiput(155,30)(3,3){13}{\circle*{2}}
\multiput(155,90)(3,3){13}{\circle*{2}}
\multiput(197,32)(3,3){4}{\circle*{2}}
\multiput(197,92)(3,3){4}{\circle*{2}}
\multiput(255,60)(3,3){3}{\circle*{2}}
\multiput(255,120)(3,3){3}{\circle*{2}}
\multiput(265,30)(3,3){13}{\circle*{2}}
\multiput(265,90)(3,3){13}{\circle*{2}}
\end{picture}

\begin{picture}(300,100)
\put(0,95){$C_4^{++}$ and $C_4^{+-}$ are tetrahedra:}

\put(40,40){$C_4^{++}$} \put(250,40){$C_4^{+-}$}

\multiput(85,26.5)(0,30){2}{\textcircled{+}}
\multiput(55,76.5)(60,0){2}{\textcircled{+}}
\multiput(205,26.5)(0,30){2}{\textcircled{$-$}}
\multiput(175,76.5)(60,0){2}{\textcircled{+}}

\multiput(64.5,71.7)(3,-5){8}{\circle*{2}}
\put(65,77){\line(3,-2){22}}
\multiput(67,80)(5,0){10}{\circle*{2}} \put(90,34){\line(0,1){20}}
\multiput(95,37.3)(3,5){8}{\circle*{2}}
\put(94,62.3){\line(3,2){22}}

\put(183,75){\line(3,-5){25}} \put(184,76){\line(3,-2){23}}
\put(185,80){\line(1,0){50}} \put(210,34){\line(0,1){20}}
\multiput(215,35)(2.7,4.5){9}{\circle*{2}}
\multiput(217,63)(4.5,3){5}{\circle*{2}}

\end{picture}

\section{Remarks concerning $T_{2k}$, $C_{2k}^{++}$, $C_{2k}^{+-}$ and their subgraphs}

\subsection{Definitions}

We say that a signed graph $G$ {\it has a profile} if its vertex set can be
partitioned into a sequence of $k\ge 3$ subsets $V_1,\dots,V_k$ so that either
\bei\item two vertices are adjacent if and only if for some $i$ one
belongs to $V_i$ and the other to $V_{i+1}$

or

\item two vertices are adjacent if and only if for some $i$ one
belongs to $V_i$ and the other to $V_{i+1}$ or one belongs to
$V_k$ and the other to $V_1$. \eni

In the latter case we say that the profile is {\it cycling}. We call the subsets $V_i$ the {\it columns} of the profile. For our
application we are interested only in profiles where the $V_i$
contain one or two vertices. In particular, this applies to
 $T_{2k}$, which has a cycling profile, and to its connected subgraphs. For a vertex $v$ in a $2$-vertex column, the other vertex in that column will be denoted $\overline{v}$, the {\it conjugate} of $v$.

It will be convenient to extend these definitions to charged signed
graphs, insisting further that each $V_i$ contains only neutral
vertices or only charged vertices all with the same charge, and
relaxing the adjacency rule to read that $xy$ is an edge in $G$ if
and only if {\it either} $x$ and $y$ are in adjacent columns {\it
or} are charged vertices in the same column. With this definition,
both $C_{2k}^{++}$ and $C_{2k}^{+-}$ have a profile.

If $G$ has a profile, then we define its {\it rank} to be the number
of columns in the profile -- this may depend on the profile, but see
Lemma \ref{L:cols} below.

In any graph $G$ we define, following \cite{HR}, a \emph{chordless
path} or {\it chordless cycle} to be a path or cycle $P$ with the property that if two vertices of $P$ are adjacent in $G$  then they are adjacent in $P$.     We call the
maximum number of vertices, taken over all chordless paths and chordless cycles of $G$, to be the \emph{(path) rank} of
$G$.     As we shall show, the two definitions of rank coincide,
except for small graphs.

\subsection{The subgraphs of $T_{2k}$, $C_{2k}^{++}$ and
$C_{2k}^{+-}$}

  As drawn above, the $2k$
vertices of $T_{2k}$ or $C_{2k}^{++}$ or $C_{2k}^{+-}$ have a
profile of rank $k$, with each column comprising two mutually
conjugate vertices.
 Now let $G$ be a connected subgraph of one of $T_{2k}$, $C_{2k}^{++}$,
or $C_{2k}^{+-}$, drawn as above. Then $G$ inherits a profile from
one of these graphs.

\begin{lemma}\label{L:cols}
Let $G$ be  equivalent to  a connected subgraph     of one of
$T_{2k}$, $C_{2k}^{++}$, and $C_{2k}^{+-}$.   If $G$ has  path
rank at least $5$ then  this equals its profile rank, and its
columns are uniquely determined. Moreover their order is
determined up to reversal or cycling.
\end{lemma}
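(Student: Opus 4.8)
The plan is to analyse chordless paths and chordless cycles directly in a profile graph, exploiting the fact that in $T_{2k}$, $C_{2k}^{++}$ and $C_{2k}^{+-}$ consecutive columns are completely joined, while the two vertices of a single column are adjacent precisely when that column is charged. Fix any profile $V_1,\dots,V_k$ that $G$ admits (linear or cycling) and write $c(u)$ for the index of the column containing $u$. For an edge $uv$ of $G$ we have $c(v)=c(u)$ (possible only inside a charged column) or $c(v)=c(u)\pm1$ (indices mod $k$ in the cycling case). The heart of the matter is the following rigidity statement, which I would prove first: if $P$ is a chordless path or cycle with at least $5$ vertices, then $P$ meets each column in at most one vertex.

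To establish this, suppose a column $V_i=\{v,\overline v\}$ meets $P$ in both of its vertices. Every $P$-neighbour $w\ne\overline v$ of $v$ lies in $V_{i-1}\cup V_{i+1}$ and is therefore also a neighbour of $\overline v$ in $G$; since $P$ is chordless, $w$ must in fact be a $P$-neighbour of $\overline v$ as well. Hence $v$ and $\overline v$ have the same $P$-neighbours, and because each vertex of a path or cycle has at most two $P$-neighbours this collapses $P$ to a short configuration: when $v$ and $\overline v$ each have two $P$-neighbours one obtains the $4$-cycle $v\,w\,\overline v\,w'$, and otherwise one of them is an endpoint and $P$ reduces to the path $v\,w\,\overline v$. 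Either way $P$ has fewer than $5$ vertices, against hypothesis. The charged case is identical, the only extra possibility being the edge $v\overline v$ itself, which again forces a triangle or length-$2$ path. With at most one vertex per column the indices along $P$ are distinct and change by $0$ or $\pm1$ at each step; a step of $0$ would repeat a column, so the steps are $\pm1$, and distinctness forces them to be strictly monotone. Thus $P$ runs through a block of consecutive columns, one vertex apiece, giving $\text{(path rank)}\le\text{(profile rank)}$.

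For the reverse inequality I would exhibit a realising path: choose one vertex from each column in order, $u_1u_2\cdots u_r$. Consecutive $u_i$ lie in adjacent columns and are joined, while any $u_i,u_j$ with $|i-j|\ge2$ lie in non-adjacent columns and so are non-adjacent in $G$. Hence there are no chords, and we obtain a chordless path (or, in the cycling case where $G$ wraps all the way round, a chordless cycle) on $r$ vertices, whence $\text{(path rank)}\ge\text{(profile rank)}$. Combining the two bounds gives equality; since the argument applies to every profile that $G$ admits, all such profiles share the same rank, namely the intrinsically defined path rank.

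It remains to pin down the columns and their order. Here I would use the observation that the two vertices of a common column are \emph{twins}: in a neutral column they are non-adjacent with exactly the same neighbours, and in a charged column they are adjacent with the same neighbours otherwise. Being twins is a property of $G$ alone, so the partition into columns of size two is forced and the remaining vertices occupy singleton columns, giving uniqueness. The order of the columns is then read off from any longest chordless path $u_1\cdots u_r$ constructed above, whose vertices meet the columns in their profile order; this determines the order up to reversing the path, and in the cycling (toral) case up to the rotational freedom in the choice of starting column, i.e.\ up to reversal or cycling. The main obstacle, and the reason for the hypothesis of path rank at least $5$, is exactly the rigidity step: for short paths and cycles the complete bipartite joins between columns admit several incompatible profile interpretations (the $4$-cycle $C_4=K_{2,2}$ being the basic example, together with highly symmetric small graphs such as the octahedron $T_6$), and one must verify that once the path rank reaches $5$ no alternative column partition or ordering survives. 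Confirming that the twin relation has no accidental coincidences, and that a connected $G$ of path rank at least $5$ inherits a genuinely linear or genuinely cyclic arrangement with no residual ambiguity, is where the real work lies.
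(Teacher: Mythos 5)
Your proposal is correct and follows essentially the same route as the paper: the paper's proof likewise rests on the observation that a chordless path or cycle on at least $5$ vertices meets each column at most once (which you prove in detail and the paper merely asserts), deduces from a maximal such path that the profile rank equals the path rank, and then recovers the remaining columns from local adjacency data --- where you use twin-classes and the paper instead assigns each remaining vertex to a column determined by which vertices of the maximal chordless path it is adjacent to. The one step you explicitly defer, that the twin relation has no accidental coincidences, closes in one line: if $u\in V_i$ and $w\in V_j$ with $i\ne j$ had the same neighbourhoods, then $V_{i-1}\cup V_{i+1}\subseteq N(u)=N(w)\subseteq V_{j-1}\cup V_j\cup V_{j+1}$ would force $\{i-1,i+1\}\subseteq\{j-1,j,j+1\}$ (indices read cyclically when the profile cycles, and nonexistent end-columns simply omitted), which is impossible once there are at least $5$ columns --- and indeed fails for $4$ cyclic columns, exactly the $T_8$ ambiguity noted after the lemma.
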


We remark that the lemma is false if `5' is replaced by `4': the
graph $T_6$ has path rank $4$, but profile rank $3$ ; the graph
$T_8$ has
 path/profile  rank $4$, but  the columns of its profile are not
uniquely determined: if $\{1,5\}$, $\{2,6\}$, $\{3,7\}$, $\{4,8\}$
is a profile for $T_8$, then so is $\{1,7\}$, $\{2,4\}$,
$\{3,5\}$, $\{6,8\}$.  Note also that the profile does not
always determine $G$ up to equivalence, even for high rank: for
signed cycles of fixed even length there are two equivalence
classes, indistinguishable by their profiles.

\begin{proof}   We start with a chordless path or cycle $P$ of maximal
number of vertices  $r$. Because $r\ge 5$, no two of these
vertices can be in the same column (this is  not necessarily true
for $r\le 4$), and each column of $G$ contains exactly one vertex
in $P$. This shows that the profile rank equals the path rank. The
columns of the profile of $P$ inherited from that of $G$ are
singletons. We then add vertices to these columns, to complete the
profile of $G$. Because $r\ge 5$ the column to which a new vertex
can be added is completely determined by the vertices it is
adjacent to in $G$. The last sentence of the Lemma is clear.
\end{proof}

\begin{proposition}\label{P:T-C-equivalences}
\bei \item[(i)]  Let $H$ be a signed graph of rank at least $5$ that has, for some $k$, the same underlying graph as a subgraph of $T_{2k}$, drawn as above. Then $H$ is equivalent to a subgraph $G$ of $T_{2k}$ if and only if

 \bei
\item[\textbullet] the  hourglass $4$-cycles (with underlying
graph
\begin{picture}(25,15)(0,10) \multiput(0,0)(20,0){2}{\circle*{5}}
\multiput(0,20)(20,0){2}{\circle*{5}}
\multiput(0,0)(0,20){2}{\line(1,0){20}} \put(0,0){\line(1,1){20}}
\put(0,20){\line(1,-1){20}}
\end{picture} ) all have an even number of positive edges;
\item[\textbullet] the  parallelogram $4$-cycles (
\begin{picture}(45,15)(-3,10) \multiput(20,0)(20,0){2}{\circle*{5}} \multiput(0,20)(20,0){2}{\circle*{5}}
\multiput(20,0)(-20,20){2}{\line(1,0){20}}
\multiput(0,20)(20,0){2}{\line(1,-1){20}} \end{picture} ,
\begin{picture}(45,15)(-3,10) \multiput(0,0)(20,0){2}{\circle*{5}}
\multiput(20,20)(20,0){2}{\circle*{5}}
\multiput(0,0)(20,20){2}{\line(1,0){20}}
\multiput(0,0)(20,0){2}{\line(1,1){20}}
\end{picture} ) all have an odd
number of positive edges;
\item[\textbullet] the triangular $4$-cycles (
\begin{picture}(45,15)(-3,10) \multiput(0,20)(20,0){3}{\circle*{5}}
\put(20,0){\circle*{5}} \put(0,20){\line(1,0){40}}
\put(0,20){\line(1,-1){20}} \put(20,0){\line(1,1){20}}
\end{picture} ,
\begin{picture}(45,15)(0,10) \multiput(0,0)(20,0){3}{\circle*{5}} \put(20,20){\circle*{5}}
\put(0,0){\line(1,0){40}} \put(0,0){\line(1,1){20}}
\put(20,20){\line(1,-1){20}} \end{picture} ) all have an odd
number of positive edges. \eni \item[(ii)]  Let $H$ be a
charged signed graph of rank at least $5$ that has, for some
$k$, the same underlying graph as a subgraph of $C_{2k}^{++}$ or
$C_{2k}^{+-}$, drawn as above. Then $H$ is equivalent to a
subgraph $G$ of $C_{2k}^{++}$ or $C_{2k}^{+-}$ if and only if

 \bei
\item[\textbullet] the  hourglass $4$-cycles all have an even number
of positive edges;
\item[\textbullet] the  parallelogram $4$-cycles
all have an odd number of positive edges;
\item[\textbullet] the  triangular $4$-cycles
all have an odd number of positive edges;

\item[\textbullet] the triangles containing two charged vertices
in the subgraph have the property that if the charges are positive
(respectively negative)  then the triangle has an even number of
positive (resp.~negative) edges. \eni\eni
\end{proposition}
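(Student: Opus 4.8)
The plan is to translate everything into the standard $\mathbb{F}_2$-linear theory of sign changes. Once the underlying graph and the charges are fixed, the only moves available in passing between equivalent matrices are \emph{switching} (conjugation by a diagonal $\pm1$ matrix, which replaces the sign $s_{ij}$ of the edge $ij$ by $s_{ij}+\mathbf 1[\,|\{i,j\}\cap S|=1\,]$ for the switched set $S$, and leaves every charge fixed) and the global sign change $A\mapsto -A$ (which negates every edge sign \emph{and} every charge). Writing edge signs as elements of $\mathbb{F}_2$ ($0$ for $+$, $1$ for $-$), switching adds to the sign vector an element of the cut space $\operatorname{im}\delta\subseteq\mathbb{F}_2^{E}$, and the global change adds the all-ones vector $\mathbf 1_E$. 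By the classical duality between cut space and cycle space, two signings of the same graph are switching-equivalent if and only if they assign the same sign-product $\prod_{\text{edges}}s$ to every cycle; the global change multiplies the sign-product of a cycle by $(-1)^{\text{length}}$, hence does nothing to even cycles and flips odd ones (while also flipping the charges). In particular the sign-product around any $4$-cycle is a full equivalence invariant, and for a triangle carrying two equal charges the charge-weighted datum ``parity of positive edges together with the sign of the charges'' is preserved, because both entries flip together under $A\mapsto -A$.

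Necessity is then immediate: I would read off from the drawings of $T_{2k}$ and of $C_{2k}^{\pm}$ the sign-products of the three distinguished families of $4$-cycles (hourglass, parallelogram, triangular) and of the charged triangles, checking that they are respectively even, odd, odd, and that the charged triangles satisfy the stated charge-dependent parity. Since these quantities are equivalence invariants by the previous paragraph, any $H$ equivalent to a subgraph $G$ of $T_{2k}$ (resp.\ $C_{2k}^{\pm}$) must satisfy exactly the listed conditions.

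For sufficiency the task reduces, via the same duality, to a purely combinatorial generation statement: \emph{the three families of $4$-cycles (together with the charged triangles in case (ii)) span the cycle space of the underlying graph modulo the line $\langle\mathbf 1_E\rangle$}. Granting this, if $H$ meets all the listed conditions then $H$ and the canonical $G$ agree on the sign-products of every member of a generating set, hence on every cycle up to the global-sign/charge-flip ambiguity, so $s_H-s_G$ lies in $\operatorname{im}\delta$ or in $\operatorname{im}\delta+\mathbf 1_E$ with charges matching accordingly, which is precisely $H\sim G$. To prove the generation I would use the profile supplied by Lemma \ref{L:cols}: because the path rank is at least $5$, $G$ has well-defined columns $V_1,\dots,V_r$ of size at most $2$, ordered up to reversal and cycling, and every edge joins consecutive columns (or wraps, or joins two charged vertices of one column). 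Any cycle can then be reduced column by column: between two consecutive columns the at-most-four incident edges lie in the local hourglass/parallelogram/triangular $4$-cycles, so each ``turn'' of the cycle can be cancelled in $\mathbb{F}_2$ against one of these $4$-cycles, telescoping the whole cycle to $0$ (for a cycling profile, to the single wrap-around generator, which is an even cycle and so is absorbed by $\mathbf 1_E$ together with the $4$-cycle relations). In case (ii) the charged triangles supply the missing odd generators sitting at the charged end-columns, and one checks that adjoining them yields the entire cycle space modulo $\langle\mathbf 1_E\rangle$.

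The main obstacle is this generation step, and specifically (a) making the column-by-column telescoping uniform in $k$ and valid for every connected subgraph, and (b) the wrap-around case of a cycling profile, where one must confirm that the unique non-contractible generator is accounted for by the global-sign freedom rather than contributing a new invariant. It is here that the hypothesis ``rank at least $5$'' is essential and not merely convenient: it is exactly what Lemma \ref{L:cols} needs in order to guarantee that the columns---and hence the local $4$-cycles used as generators---are well defined, the remark after that lemma (the graphs $T_6$ and $T_8$) showing that the whole scheme breaks down at rank $4$. Keeping the $\mathbb{F}_2$ bookkeeping straight, in particular the interaction of $\mathbf 1_E$ with the odd charged-triangle generators, is the delicate part of the argument.
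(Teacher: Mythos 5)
Your necessity argument coincides with the paper's (the parity of the number of positive edges on an even cycle is an equivalence invariant, read off the standard drawings), and your general $\mathbb{F}_2$ framework---switchings form the cut space, the global sign change is $\mathbf 1_E$, and switching classes of signings of a fixed graph are separated by sign-products over the cycle space---is sound. The gap is in the sufficiency step, at exactly the point you flag as delicate: the wrap-around cycle of a cycling profile. You assert that this generator ``is an even cycle and so is absorbed by $\mathbf 1_E$''; this is backwards. The global sign change multiplies the sign-product of a cycle by $(-1)^{\mathrm{length}}$, so it is precisely the \emph{even} wrap-around cycles whose sign-product it cannot touch. Concretely, let $H$ be a signed $k$-cycle with $k\ge 6$ even whose product of edge signs is $-1$. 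Its underlying graph is that of the top row of $T_{2k}$, its rank is $k\ge 5$, and all three $4$-cycle conditions hold vacuously. Comparing against the fixed ``canonical'' $G_0$ = top row (all edges positive, sign-product $+1$), the difference $s_H-s_{G_0}$ pairs to $1$ with the wrap-around cycle, which is even (so $\mathbf 1_E$ pairs to $0$ with it) and has odd winding number (so it is not in the span of the distinguished $4$-cycles, all of which have winding number zero). Hence $s_H-s_{G_0}\notin\operatorname{im}\delta+\langle\mathbf 1_E\rangle$, and your argument would conclude, wrongly, that $H$ is not equivalent to a subgraph of $T_{2k}$. It is: embed $v_1,\dots,v_{k-1}$ on the top row and $v_k$ on the \emph{bottom} row of the next column, picking up exactly one negative edge from the positive-slope diagonal.

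The structural reason for the failure is that your scheme fixes the target subgraph $G_0$ (hence the drawn underlying graph) once and for all and works inside the coset structure on signings of that single graph, whereas the proposition only asserts equivalence to \emph{some} subgraph $G$; the paper explicitly warns that $G$ need not have the same drawn underlying graph as $H$. Exchanging a vertex for its conjugate changes the drawn graph and realises the other value of the wrap-around invariant, and this extra freedom is what the paper's constructive proof exploits through its case split: switch a maximal chordless path or cycle to have all edges positive (case (a)), or, failing that, exactly one negative edge (case (b)), and in the latter case drop the final vertex to the bottom row before propagating the signs outwards using the $4$-cycle conditions. To repair your argument you would need to let $G_0$ range over all such embeddings and show the $4$-cycle conditions characterise the resulting union of cosets, which in effect reproduces the paper's argument. (A smaller point: in case (ii) the interaction of the global sign change with the charges, which your final bullet condition is meant to absorb, is asserted rather than carried out; the paper handles it by first normalising the charges on the embedded maximal path.)
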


Note that we are not claiming that the charged signed subgraph $G$
found has the same underlying drawn graph as the charged signed
graph $H$ that we started with.

 \begin{proof}  We first show that the conditions given in
the Proposition are  necessary. Since $H$ has rank at least $5$, its profile is, by Lemma \ref{L:cols}, uniquely determined, and thus each $4$-cycle is specified by our standard drawing of $T_{2k}$ as being either
\bei\item an hourglass

or
\item a parallelogram $4$-cycle or a triangular $4$-cycle.
\eni

(If conjugate vertices are interchanged in the drawing, parallelogram $4$-cycles can become triangular $4$-cycles, and vice versa.) Since equivalence preserves the parity of the number of positive edges on an even cycle, we have necessity.

We now prove sufficiency. We assume that the given conditions
hold, and prove that they are sufficient: that our given graph is
then equivalent to a subgraph of $T_{2k}$, $C_{2k}^{++}$ or
$C_{2k}^{+-}$. To do this, we need to embed a graph equivalent to
$H$ into one of $T_{2k}$, $C_{2k}^{++}$ or $C_{2k}^{+-}$  so that
the resultant embedding $G$ inherits its edge and vertex signs
from the graph it is embedded into.

\bei\item[(i)]   Suppose that we have a signed graph $H$ of rank at least $5$ that shares
the same underlying graph as a subgraph of $T_{2k}$, drawn as above,
and satisfies the three $4$-cycle conditions above.

Take a maximum-length chordless path or cycle $P$ in $H$, of rank $k'$ say.
 By switching we can arrange that $P$ either has
 \bei\item[(a)] has all edges positive;
 \item[(b)] exactly one negative edge.
 \eni
  In case (b) we can further
assume that there is in fact no choice of $P$ that has, after
switching, all edges positive. Put $k=k'$ if $P$ is a cycle, and
$k=k'+1$ if not. In case (a) we embed $P$ in the top row of
$T_{2k}$. In case (b), $P$ is a cycle, and we can assume that
$P=v_1v_2\dots v_k$, with only edge $v_kv_1$ negative. We then
embed $P$ into $T_{2k}$ so that  $v_1,\dots, v_{k-1}$ are on the
top row, and the final vertex $v_k$ is one column further along,
on the bottom row. Then the signs of $P$ and $T_{2k}$ are
consistent, the edge of negative slope having positive sign, and
vice versa. Note too that in this latter case the vertex $\bar
v_k$ cannot be present in $H$. For if it were, then the edges
joining it to $v_1$ and $v_{k-1}$ could not be of the same sign
(or we could by switching make the top row all positive, and go
back and choose the top row to be $P$), or of opposite signs (as
this would then contradict the stated triangular $4$-cycle
condition).

We can now embed into $T_{2k}$ those conjugates of $v_1,\dots,v_k$
that are present in $H$, by placing them in their appropriate
columns on the bottom row of $T_{2k}$. Note that up to two
triangular $4$-cycles in $H$ may become become parallelogram
$4$-cycles, and vice versa,  by this embedding. This induces an
embedding $G$ of $H$ into $T_{2k}$, though without the signs of
the edges yet agreeing. To achieve this agreement, we switch at
these newly embedded vertices, if necessary, to ensure that all
edges of negative slope have positive sign. We also switch at any
(there can be no more than one) vertex in the bottom row that has
no incident edge of negative slope, if necessary, to ensure that
the incident edge of positive slope has negative sign.

We next claim that, after making these switchings, all edges of
the embedding $G$ do indeed have the same sign as the edges of
$T_{2k}$. First consider an edge of $G$ of positive slope. If not
already made to have negative sign, such an edge must be part of a
triangular $4$-cycle where the two horizontal edges and the edge
of negative slope all have positive sign. Hence, by the stated
triangular $4$-cycle condition, the edge of positive slope must
have negative sign. (Note that because both the stated
parallelogram $4$-cycle condition and the triangular $4$-cycle
condition hold for $H$, the triangular $4$-cycle condition holds
for $G$.) Finally, every horizontal edge on the second row is part
of an hourglass $4$-cycle, which implies that it must have
negative sign.

\item[(ii)] Again take a maximum-length chordless path $P$ in $H$,
say of rank $k'\ge 5$. At least one of its endvertices must be
charged, or else $H$ could be embedded in $T_{2k}$, and by
equivalence we may assume that if there are two charged vertices
on $P$ then they are not both negative. By switching we may assume
that all edges of $P$ are positive, so $P$ can be embedded
sign-consistently on the top row of one of $C_{2k}^{++}$ or
$C_{2k}^{+-}$, where $k=k'$ if $P$ has two signed vertices, and
$k=k'+1$ if not. We then proceed as in (i), which ensures that all
horizontal edges, and those of positive or negative slope, have
the same sign as that of $C_{2k}^{++}$ or $C_{2k}^{+-}$ in the
embedding. Finally, the triangle condition ensures that the
vertical edges must have their signs as in $C_{2k}^{++}$  or
$C_{2k}^{+-}$. \eni \end{proof}

A significant feature of the above result is that the stated
parity conditions on the $4$-cycles are all in some sense {\it
local} conditions. The requirement that the rank should be at
least $5$ is best-possible: one can permute the vertices of $T_8$
so as to produce an isomorphic underlying graph but with the
$4$-cycle parity conditions failing to hold.

\section{Proof of Theorem \ref{T:main} }
We now prove our main result, Theorem \ref{T:main}.  We first
quickly dispose of cases where at least one entry has modulus
greater than $1$ (Sections \ref{SS:modulusge3} and
\ref{SS:modulus2}).  The meat of the proof is concerned with
adjacency matrices of charged signed graphs (Sections \ref{SS:csg}
and \ref{SS:finitesearch}).

\subsection{An entry with modulus at least 3}\label{SS:modulusge3}

Let $A$ be minimal noncyclotomic, and suppose that some entry
$a=a_{ij}$ has modulus at least 3.

If this a diagonal entry ($i=j$), then $(a)$ is a noncyclotomic
submatrix, so must equal $A$. Since we can change the signs of all
entries, we can assume that $a\ge 3$, and $A$ is a member of our
first infinite family in \S\ref{SS:minnoninfinite}.

If $i\ne j$, then $A$ has a noncyclotomic submatrix
$$\left(\begin{array}{cc}b&a\\a&c\end{array}\right)\,,$$ so this must
equal $A$. Since $A$ is minimal noncyclotomic, $|b|\leq 2$ and
$|c|\leq 2$. By permuting and switching, we can suppose that $0\le
|c|\le b\le 2$, and that if $b=c=0$ then $a>0$. We see that $A$ is
a member of one of the other infinite families in
\S\ref{SS:minnoninfinite}.

\subsection{An entry with modulus 2}\label{SS:modulus2}
Now we suppose that $A$ is minimal noncyclotomic, has all entries
below 2 in modulus, with at least one entry $a_{ij}$ having
modulus equal to $2$, and indeed working up to equivalence we may
suppose that $a_{ij}=2$.

If $i=j$, then since $(2)$ is cyclotomic $A$ has at least two
rows, and contains a submatrix
$$\left(\begin{array}{cc}2&a\\a&b\end{array}\right).$$
We can choose this submatrix such that $a\ne0$, or else $A$ would
be decomposable. No such 2-by-2 matrix is cyclotomic, so this must
be the whole of $A$. Now $|b|\leq 2$ by hypothesis, hence $A$ is a
member of our second infinite family in \S\ref{SS:minnoninfinite}.

If $i\ne j$, then $A$ contains a submatrix
$$\left(\begin{array}{cc}a&2\\2&b\end{array}\right).$$
If either $a$ or $b$ is not zero, then this matrix is not
cyclotomic, so equals $A$, which is seen to be equivalent to a
member of the second or third infinite family in
\S\ref{SS:minnoninfinite}. Finally, if $a=b=0$, then
$$\left(\begin{array}{cc}a&2\\2&b\end{array}\right)=\left(\begin{array}{cc}0&2\\2&0\end{array}\right)$$
is cyclotomic, so is not the whole of $A$. In this case, $A$ has a
submatrix equivalent to
\[
\left(\begin{array}{ccc}0&2&a\\2&0&b\\a&b&c\end{array}\right),
\]
for some $a$, $b$, $c$ (all of modulus at most 2, by hypothesis).
Moreover since $A$ is indecomposable we can choose our submatrix
such that $a$ and $b$ are not both 0.

Note that $|c|<2$, or else (using $a$, $b$ not both 0) $A$ would have
a 2-by-2 noncyclotomic submatrix.

We quickly check that all cases are equivalent to one of the eight
sporadic examples listed at the start of \S\ref{SS:minnonsporadic}
that extend
$$\left(\begin{array}{cc}
0 & 2  \\
2 & 0
\end{array}
\right).$$

\subsection{Charged signed graphs: reduction to a finite
search}\label{SS:csg}

From the classification of cyclotomic charged signed graphs
recalled in Section \ref{S:maxcyc}, we know that every connected
proper subgraph of a minimal noncyclotomic charged signed graph is
either equivalent to a subgraph of one of the sporadic examples
$S_7$, $S_8$, $S'_8$, $S_{14}$, $S_{16}$, or is equivalent to a
subgraph of one of the three infinite families $T_{2k}\quad (k\ge
3)$, $C_{2k}^{++}\quad (k\ge 2)$, $C_{2k}^{+-}\quad (k\ge 2)$. We
call a minimal noncyclotomic charged signed graph with $n$
vertices \emph{supersporadic} if it has a connected subgraph with
$n-1$ vertices that is equivalent to a subgraph of one of $S_7$,
$S_8$, $S'_8$, $S_{14}$, $S_{16}$.
It is clear that the number of supersporadic minimal noncyclotomic
signed graphs is finite, and that they can all be computed (in
principle) by running through every connected subgraph of $S_7$,
$S_8$, $S_8'$, $S_{14}$, $S_{16}$ and considering all possible
ways of adding a single vertex to the subgraph. In practice, this
computation was performed rather more efficiently, as detailed in
\S\ref{SS:finitesearch}.

There remains the problem of finding any minimal noncyclotomic
charged signed graphs that are not supersporadic. Potentially,
this is an infinite search. We shall show, however, that the
number of vertices in any such graph is at most  $10$, reducing
the problem to a finite one.

\begin{proposition}\label{P:degreebound}
Let $G$ be a connected charged signed graph with $n\ge 11$
vertices, and such that every proper  connected subgraph of $G$ is
equivalent to  a subgraph of $T_{2k}$, $C_{2k}^{++}$ or
$C_{2k}^{+-}$, for some $k$. Then $G$ is also  equivalent to  a
subgraph of some $T_{2k}$, $C_{2k}^{++}$ or $C_{2k}^{+-}$.
\end{proposition}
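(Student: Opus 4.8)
The plan is to take a connected charged signed graph $G$ with $n\ge 11$ vertices, every proper connected subgraph of which embeds (up to equivalence) into some $T_{2k}$, $C_{2k}^{++}$, or $C_{2k}^{+-}$, and to show that $G$ itself so embeds. First I would delete a single vertex $v$ from $G$ chosen so that $G-v$ remains connected (such a $v$ exists in any connected graph of at least two vertices). By hypothesis $G-v$ is equivalent to a subgraph of one of the three families, and since $G-v$ has at least $10$ vertices it has path rank at least $5$; by Lemma \ref{L:cols} its profile is uniquely determined, its columns are forced, and their cyclic/linear order is determined up to reversal and cycling. So $G-v$ comes with an intrinsic column structure, and the whole problem reduces to understanding where the reinstated vertex $v$ can attach and how its edge- and charge-signs behave.

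The next step is to analyze the neighbours of $v$ in $G$ using the profile of $G-v$. Because the adjacency rule of a profile only permits edges between adjacent columns (or within a charged column), and because $v$ together with its neighbourhood must itself sit inside some $T$, $C^{++}$, or $C^{+-}$ (applying the hypothesis to suitable proper connected subgraphs containing $v$), the neighbours of $v$ must all lie in at most two consecutive columns of $G-v$, so that $v$ can be assigned to a single new or existing column consistent with the profile. Here the largeness of $n$ is essential: with many columns available one can always find a proper connected subgraph that contains $v$, contains enough of the long chordless path to have rank at least $5$, and therefore pins down the local column assignment of $v$ via Lemma \ref{L:cols} and Proposition \ref{P:T-C-equivalences}. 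The idea is to run this argument at two overlapping ``windows'' of columns so that the two local embeddings of $v$ agree and glue to a global profile for $G$.

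Once $v$ has been placed in the profile so that $G$ acquires a profile of rank $k$ (or $k+1$), it remains to verify that $G$ is \emph{equivalent} to an actual subgraph of the corresponding family member, i.e. that after suitable switching the edge- and charge-signs match those of $T_{2k}$, $C_{2k}^{++}$, or $C_{2k}^{+-}$. This is exactly what Proposition \ref{P:T-C-equivalences} delivers: since $G$ has rank at least $5$, I only need to check that every hourglass $4$-cycle has an even number of positive edges, every parallelogram and triangular $4$-cycle has an odd number, and (in the charged case) the two-charged-vertex triangle parity condition holds. Each such $4$-cycle or triangle of $G$ either lies entirely in $G-v$ (where the conditions hold, as $G-v$ is already embedded) or involves $v$; for the latter I apply the hypothesis to a proper connected subgraph carrying that $4$-cycle together with a rank-$5$ chordless path, forcing the parity locally. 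Since the conditions of Proposition \ref{P:T-C-equivalences} are purely local to individual $4$-cycles and triangles, verifying them one at a time suffices.

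The main obstacle I expect is the step that forces $v$'s neighbourhood to be confined to consecutive columns and that reconciles the two overlapping local embeddings into one coherent global profile. The danger is a vertex $v$ adjacent to columns that are far apart in the profile of $G-v$, or adjacent in a pattern that is locally consistent but globally impossible to realise as a single column; ruling this out is where one must exploit $n\ge 11$ most carefully, ensuring there is always enough room to build a rank-$\ge 5$ proper subgraph witnessing each adjacency of $v$ and applying Lemma \ref{L:cols} to force uniqueness of the column order. The cycling case (subgraphs of $T_{2k}$, whose profile is cycling) needs extra care, since ``consecutive columns'' must be read cyclically and the reversal/cycling ambiguity of Lemma \ref{L:cols} must be tracked, but once the column placement of $v$ is pinned down the sign-matching via Proposition \ref{P:T-C-equivalences} is routine.
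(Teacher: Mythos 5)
Your overall strategy (delete vertices, use Lemma \ref{L:cols} to force a profile, reinstate, then invoke Proposition \ref{P:T-C-equivalences}) is the right shape, but the proposal leaves the decisive step unproved, and you flag it yourself as the ``main obstacle'' without overcoming it: you never show that the reinstated vertex $v$ can be placed in a single column consistently with the profile of $G-v$. Deleting one \emph{arbitrary} non-cut vertex is where the trouble starts. To pin down where $v$ attaches you must compare the profile of $G-v$ with the profile of some other proper connected subgraph $G-w$ containing $v$; for those two profiles to be forced to agree on their overlap you need $G-\{v,w\}$ to be connected and of rank at least $5$, and neither the connectivity nor the mutual consistency of your several ``windows'' is established. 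A smaller but real slip: a vertex placed in an existing column $i$ has its neighbours in columns $i-1$ and $i+1$ (plus possibly its charged conjugate in column $i$), so ``at most two consecutive columns'' is not even the correct target for the neighbourhood of $v$.

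The paper sidesteps all of this by choosing the \emph{two} deleted vertices to be the endvertices $x,y$ of a maximum chordless path or cycle $P$. Maximality of $P$ (together with the triangle condition for the charged families) is used to prove that $G-\{x\}$, $G-\{y\}$ and $G-\{x,y\}$ are all connected, hence all three have uniquely determined profiles of rank at least $5$; since $x$ and $y$ must occupy opposite \emph{end} columns, every adjacency of $x$ except possibly to the column of $y$ is read off from the profile of $G-\{y\}$, and vice versa, so the two profiles merge cleanly into a candidate profile for $G$. The one remaining ambiguity --- whether the two end columns are fully joined, i.e.\ whether the profile cycles --- is resolved by deleting an interior column of $G-\{x,y\}$ to manufacture a further proper connected subgraph certifying the complete adjacency between the end columns. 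Your proposal contains no substitute for either of these devices, so the gluing step you identify is a genuine gap rather than a routine verification.
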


It follows straight from this result that a minimal noncyclotomic
charged signed graph that is not supersporadic can have no more
that $10$ vertices.

\begin{proof} Take such a $G$ . Our first aim is to show that $G$ has a
profile. Take a chordless path or cycle $P$ with a maximal number
of vertices, and let  $x$ and $y$ be the endvertices of $P$ if $P$
is a path, and let them be any two adjacent vertices of $P$ if $P$
is a cycle. If the maximal number of vertices for a chordless
cycle equals that for a chordless path, then take $P$ to be a
path.

 First note that no vertex of $G$ is adjacent to $x$ but to no other
vertex on $P$, or else we could either grow $P$ to a longer
chordless path, or replace a chordless cycle $P$ by a chordless
path of equal length.  Similarly for $y$.  It follows that
$G-\{x\}$ is connected, and since it contains at least $10$
vertices it has rank at least $5$, and hence $P$ contains at least
$5$ vertices. In the case where $P$ is a path, if there were a
vertex not on $P$ adjacent to both $x$ and $y$ but to no other
vertex in $P$, then $P$ could be extended to a longer chordless
cycle: a contradiction.  In the case where $P$ is a cycle
$yxv_1v_2\cdots$, if there were a vertex $z$ not on $P$ adjacent
to both $x$ and $y$ but to no other vertex on $P$, then
$P\cup\{z\}-\{v_2\}$ would be a proper connected subgraph of $G$
containing the triangle $zyx$, hence containing two charged
vertices, yet not equivalent to a subgraph of $C_{2k}^{++}$ or
$C_{2k}^{+-}$ (here using that $P-\{v_2\}$ contains at least $4$
vertices): another contradiction. In all cases we see that $G$ has
no vertex adjacent to either $x$ or $y$ that is not also adjacent
to some vertex on $P$, so $G-\{x,y\}$ is connected.

Now $G-\{x,y\}$     has rank say $r$ at least $5$ so that, by
Lemma \ref{L:cols} , it has a uniquely determined profile.
As the profiles of $G-\{x\}$ and $G-\{y\}$ are also uniquely
determined, they can each be obtained by adding $y$ or $x$ to the
profile of $G-\{x,y\}$. Leaving aside for the moment the issue of
whether or not $x$ and $y$ are adjacent in $G$, we see that all
other possible adjacencies of $x$ in $G$ can be read off from the
profile of $G-\{y\}$, and all other possible adjacencies of $y$ in
$G$ can be read off from the profile of $G-\{x\}$. Thus we can
merge the profiles of $G-\{x\}$ and $G-\{y\}$ to obtain a new
sequence of `columns', $\mathcal{C}$, which we shall show is in
fact the profile of $G$ . In this merging, certainly columns
$2,3,\dots,r-1$ carry over unchanged. The vertices $x$ and $y$
will lie either in columns $1$ or $r$ or in a new singleton column
to the left or right of the profile of $G-\{x,y\}$. As $x$ and $y$
are the endpoints of a maximal chordless path or cycle, they must
be in opposite end columns.

First suppose that at least one of the vertices in the column of
$x$ is adjacent to at least one of the vertices in the column of
$y$. Then by deleting   column  $3$ of $G-\{x,y\}$ we obtain
another connected proper subgraph of $G$, which is therefore
equivalent to a subgraph of some $T_{2k}$, $C_{2k}^{++}$ or
$C_{2k}^{+-}$. Furthermore, it is determined by its profile, so
that in fact every vertex in the column of $x$ is adjacent to
every vertex in the column of $y$.   Thus $\mathcal{C}$ is
indeed the profile of $G$. The local conditions of Proposition
\ref{P:T-C-equivalences} are seen to hold, and $G$ is
equivalent to a subgraph of some $T_{2k}$.

Alternatively, it may be that no vertex in the column of $x$ is
adjacent to any other vertex in the column of $y$.  Then again
$\mathcal{C}$ is the profile of $G$.  The local conditions of
Proposition \ref{P:T-C-equivalences} (perhaps this time with
charged vertices at one or both ends) hold, since they hold for
both $G-\{x\}$ and $G-\{y\}$,  so that $G$ is equivalent to  a
subgraph of some $T_{2k}$, $C_{2k}^{++}$ or $C_{2k}^{+-}$.
\end{proof}

\subsection{Charged signed graphs: details of the finite search}\label{SS:finitesearch}

  By the \emph{degree} of an uncharged vertex $v$ we mean the
number of adjacent vertices (those vertices $w$ such that there is
an edge of either sign between $v$ and $w$). We define the degree
of a charged vertex to be one more than the number of adjacent
vertices.

\begin{lemma}\label{L:5} Up to equivalence, the graphs $5b$,
$5y$ and $6c$ are the only minimal noncyclotomic charged signed
graphs containing a vertex of degree greater than $4$.
\end{lemma}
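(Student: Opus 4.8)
The plan is to show that a minimal noncyclotomic charged signed graph $G$ with a vertex $v$ of degree $\ge 5$ must be very small, by exploiting two constraints simultaneously: first, that every proper subgraph of $G$ is cyclotomic (hence equivalent to a subgraph of one of the catalogued maximal cyclotomic graphs $S_7,S_8,S_8',S_{14},S_{16},T_{2k},C_{2k}^{++},C_{2k}^{+-}$); and second, that the cyclotomic graphs themselves have bounded degree. Let me first record the relevant degree data.

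The key step is a census of maximal degrees in the maximal cyclotomic charged signed graphs. In the infinite families $T_{2k}$, $C_{2k}^{++}$, $C_{2k}^{+-}$ every uncharged vertex has exactly four neighbours (degree $4$), and in the cylindrical families the end columns are charged, contributing degree $1+3=4$ as well; so no vertex in these families has degree exceeding $4$. Among the sporadic examples one checks directly that a vertex of degree $\ge 5$ occurs only in a small number of places. I would go through $S_7$, $S_8$, $S_8'$, $S_{14}$, $S_{16}$ and list exactly which vertices have degree $5$ or more and what their local neighbourhoods look like (remembering the convention that a charge adds $1$ to the degree). This is a finite, mechanical check on the explicit graphs drawn in Section \ref{S:maxcyc}.

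Now suppose $G$ is minimal noncyclotomic and has a vertex $v$ of degree $\ge 5$. Let $N$ be the (induced) subgraph on $v$ together with its neighbours. If $G$ itself has a proper subgraph containing $v$ with $v$ still of degree $\ge 5$, that subgraph is cyclotomic, hence equivalent to a subgraph of one of the catalogued graphs, and in particular $v$ (with its $\ge 5$ incident edges) must embed into a degree-$\ge 5$ vertex of one of the sporadics listed in the previous step. The plan is to enumerate these star-neighbourhoods $N$ and, for each, to determine all ways of completing $N$ to a minimal noncyclotomic graph. Because $N$ already has $\ge 6$ vertices and lives inside a sporadic, one first bounds the size of $G$ (the supersporadic case of Proposition \ref{P:degreebound}, or the degree-$4$ bound combined with the star having $\ge 5$ leaves, forces $G$ to be genuinely small), and then runs the finite search: add vertices one at a time, keeping the graph cyclotomic until the last vertex, and test each candidate for minimal noncyclotomicity by computing its spectral radius. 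The claim is that exactly three graphs survive: $5b$, $5y$, $6c$.

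The main obstacle, and where the real work lies, is controlling the combinatorial explosion of completions: at each added vertex one must consider every choice of neighbour-set, every sign on each new edge, and every charge, then test cyclotomicity. The saving grace is that the high-degree vertex $v$ pins down the local structure tightly---its neighbourhood must already match a degree-$\ge 5$ configuration inside a known cyclotomic graph---so the number of genuinely distinct starting neighbourhoods $N$ is tiny, and the interlacing bound (any supergraph of a noncyclotomic graph is noncyclotomic, and minimality forbids a noncyclotomic proper subgraph) prunes almost every branch immediately. I would therefore organise the search around the short list of admissible $N$, verify that each extends uniquely (up to equivalence) to one of $5b$, $5y$, $6c$ before becoming noncyclotomic, and confirm by direct eigenvalue computation that these three are indeed minimal noncyclotomic while every smaller relative is cyclotomic.
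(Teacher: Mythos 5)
Your proposal is correct and follows essentially the same route as the paper: the decisive fact in both is that no cyclotomic charged signed graph has a vertex of degree exceeding $4$ (read off from the classification of maximal cyclotomics), so a proper induced subgraph retaining the high-degree vertex cannot exist, which caps $G$ at $6$ vertices and reduces everything to the same finite search yielding $5b$, $5y$, $6c$. Your detour through enumerating degree-$\ge 5$ star-neighbourhoods inside the sporadics is vacuous once the census is done (there are none), but it collapses to exactly the paper's contradiction argument.
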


\begin{proof} All such graphs $G$ with up to $6$ vertices are readily
found by searching. Now  suppose that $G$ has at least $7$
vertices, with  a vertex $x$ of degree at least $5$. Suppose first
that $x$ is uncharged. Take $x_1$, \ldots, $x_5$ to be five of the
neighbours of $x$. Since $G$ has at least $7$ vertices, the
subgraph induced by $x$, $x_1$, \ldots, $x_5$ must be cyclotomic.
Similarly if $x$ is charged, the subgraph induced by $x$ and four
of its neighbours must be cyclotomic. Yet no cyclotomic charged
signed graph has a vertex of degree greater than $4$. \end{proof}

We generated lists of connected charged signed graphs with small
numbers of vertices, and either cyclotomic or minimal
noncyclotomic, starting with the list of the two inequivalent
$1$-vertex charged signed graphs (both of these are cyclotomic).
Having produced a list of $r$-vertex charged signed graphs, to
compute a list of $r+1$-vertex charged signed graphs we considered
all ways of adding a vertex to all the $r$-vertex cyclotomics;
this would either give a cyclotomic (which was stored in the new
list), or a noncyclotomic, and in the latter case minimality was
tested, and minimal noncyclotomics added to the new list.

Since we were working up to equivalence, we deleted any graphs
from our lists that were quickly found to be equivalent to another
one. The quick test that we used did not always identify
equivalent graphs. Repeats of cyclotomic graphs were tolerated;
when producing final lists of minimal noncyclotomic graphs,
possible repeats were investigated by hand.

The search went exhaustively up to $15$-vertex charged signed
graphs, making use of Lemma \ref{L:5} once the number of vertices
was large enough. The connected minimal noncyclotomics found were
precisely those displayed in Section \ref{S:minnon} below.  After
 Proposition \ref{P:degreebound}, we then needed only  to
consider supergraphs of subgraphs of $S_{16}$, where the subgraph
contained at least $15$ vertices, but we found no more minimal
noncyclotomics.

One consequence of this search is that it reveals all those
minimal noncyclotomics that are not supersporadic: $3a$, $3f$,
$4d$, $4f$, $4n$, $5f$, $5y$, $6k$, $6l$. In particular, the
`$11$' in Proposition \ref{P:degreebound} can be reduced to
`$7$'.

\section{The minimal indecomposable noncyclotomic matrices}\label{S:minnon}

\subsection{The infinite families}\label{SS:minnoninfinite}

For any natural number $n$ greater than 2,
\[
\left( n \right)
\]
is minimal noncyclotomic. Its single eigenvalue is $n\ge2$, and
its Mahler measure is $(n+ \sqrt{n^2-4})/2\geq2.618$.
\bigskip

The following are infinite families of minimal noncyclotomic matrices:
\[
\left(
\begin{array}{cc}
2 & a \\
a & b
\end{array}
\right)\,\quad |a|\geq 1,|b|\leq 2\,,
\]
\[
\left(
\begin{array}{cc}
1 & a \\
a & b
\end{array}
\right)\,\quad |a|\geq 2,|b|\leq 1\,,
\]
\[
\left(
\begin{array}{cc}
0 & a \\
a & 0
\end{array}
\right)\,\quad a\geq 3\,.
\]
One readily checks that in all cases the spectral radius is at
least $\sqrt{2}$ and the Mahler measure is greater than $1.722$.

\subsection{The sporadic examples}\label{SS:minnonsporadic}
First we list representatives of the equivalence
classes of matrices that extend $\left(\begin{array}{cc}0&2\\2&0\end{array}\right)$:
\[
\left(\begin{array}{ccc}
0 & 2 & 2 \\
2 & 0 & 2 \\
2 & 2 & 0
\end{array}
\right), \left(\begin{array}{ccc}
0 & 2 & 2 \\
2 & 0 & 1 \\
2 & 1 & 0
\end{array}
\right), \left(\begin{array}{ccc}
0 & 2 & 1 \\
2 & 0 & 1 \\
1 & 1 & 0
\end{array}
\right), \left(\begin{array}{ccc}
0 & 2 & 0 \\
2 & 0 & 2 \\
0 & 2 & 0
\end{array}
\right),
\]
\[
\left(\begin{array}{ccc}
0 & 2 & 1 \\
2 & 0 & 1 \\
1 & 1 & 1
\end{array}
\right), \left(\begin{array}{ccc}
0 & 2 & 1 \\
2 & 0 & 1 \\
1 & 1 & -1
\end{array}
\right), \left(\begin{array}{ccc}
0 & 2 & 0 \\
2 & 0 & 1 \\
0 & 1 & 1
\end{array}
\right), \left(\begin{array}{ccc}
0 & 2 & 0 \\
2 & 0 & 1 \\
0 & 1 & 0
\end{array}
\right).
\]

In each case, the spectral radius is at least $\sqrt{5}$ and the
Mahler measure is greater than $2.081$.

The remaining sporadic examples are all adjacency matrices of
charged signed graphs. We display them in order of their number of
vertices.  Then in Table \ref{Ta:minimal} we give their spectral radius and Mahler measure.

\begin{picture}(300,60)
\put(0,42){$3$-vertex minimal noncyclotomic charged signed
graphs:}

\multiput(5,-3.5)(30,0){6}{\textcircled{+}}
\multiput(190,0)(30,0){2}{\circle*{10}}
\put(245,-3.5){\textcircled{+}} \put(275,-3.5){\textcircled{$-$}}
\put(20,26.5){\textcircled{+}} \put(85,30){\circle*{10}}
\put(140,26.5){\textcircled{$-$}} \put(200,26.5){\textcircled{+}}
\put(265,30){\circle*{10}}

\put(20,5){$3a$} \put(80,5){$3b$} \put(140,5){$3c$}
\put(200,5){$3d$} \put(260,5){$3e$}

\multiput(15,0)(60,0){5}{\line(1,0){20}}
\multiput(13,3)(60,0){5}{\line(1,2){11}}
\multiput(37,3)(60,0){5}{\line(-1,2){11}}

\end{picture}

\begin{picture}(300,30)
\multiput(20,-3.5)(30,0){5}{\textcircled{+}}
\put(175,0){\circle*{10}}
\multiput(200,-3.5)(30,0){2}{\textcircled{+}}
\put(260,-3.5){\textcircled{$-$}}

\put(65,3){$3f$} \put(155,3){$3g$} \put(245,3){$3h$}

\multiput(30,0)(30,0){2}{\line(1,0){20}}
\multiput(120,0)(30,0){2}{\line(1,0){20}}
\multiput(210,0)(30,0){2}{\line(1,0){20}}
\end{picture}

\begin{picture}(300,80)
\put(0,62){$4$-vertex minimal noncyclotomic charged signed
graphs:}

\multiput(50,0)(75,0){3}{\circle*{10}}
\multiput(100,0)(75,0){3}{\circle*{10}}
\multiput(75,20)(75,0){2}{\circle*{10}}
\put(220,16.5){\textcircled{$-$}}
\multiput(75,50)(75,0){3}{\circle*{10}}

\put(70,3){$4a$} \put(145,3){$4b$} \put(220,3){$4c$}

\multiput(50,0)(75,0){3}{\line(1,0){50}}
\multiput(50,0)(75,0){3}{\line(5,4){22}}
\multiput(100,0)(75,0){3}{\line(-5,4){21}}
\multiput(50,0)(75,0){3}{\line(1,2){25}}
\multiput(100,0)(75,0){3}{\line(-1,2){25}}
\put(75,20){\line(0,1){30}} \multiput(150,20)(0,5){6}{\circle*{2}}
\put(225,24){\line(0,1){26}}

\end{picture}

\begin{picture}(300,70)
\multiput(45,-3.5)(50,0){2}{\textcircled{$-$}}
\multiput(125,0)(50,0){2}{\circle*{10}}
\multiput(195,-3.5)(50,0){2}{\textcircled{$-$}}
\put(70,16.5){\textcircled{+}} \put(145,16.5){\textcircled{$-$}}
\put(225,20){\circle*{10}}
\multiput(70,46.5)(75,0){3}{\textcircled{$-$}}

\put(70,3){$4d$} \put(145,3){$4e$} \put(220,3){$4f$}

\multiput(55,0)(75,0){3}{\line(1,0){40}}
\multiput(54,2)(75,0){3}{\line(5,4){18}}
\multiput(53,3)(75,0){3}{\line(1,2){21}}
\multiput(75,24)(75,0){3}{\line(0,1){21}}
\multiput(96,2)(75,0){3}{\line(-5,4){17.5}}
\multiput(97,3)(75,0){3}{\line(-1,2){21}}
\end{picture}

\begin{picture}(300,70)
\multiput(20,-3.5)(50,0){2}{\textcircled{$-$}}
\multiput(100,0)(50,0){2}{\circle*{10}}
\multiput(170,-3.5)(50,0){2}{\textcircled{$-$}}
\multiput(250,0)(50,0){2}{\circle*{10}}
\multiput(45,16.5)(150,0){2}{\textcircled{+}}
\multiput(125,20)(150,0){2}{\circle*{10}}
\multiput(50,50)(225,0){2}{\circle*{10}}
\put(120,46.5){\textcircled{+}} \put(195,46.5){\textcircled{$-$}}

\put(45,3){$4g$} \put(120,3){$4h$} \put(195,3){$4i$}
\put(270,3){$4j$}

\multiput(30,0)(75,0){4}{\line(1,0){40}}
\multiput(29,2)(75,0){4}{\line(5,4){18}}
\multiput(50,24)(75,0){4}{\line(0,1){21}}
\multiput(71,2)(75,0){4}{\line(-5,4){17.5}}
\end{picture}

\begin{picture}(300,70)
\put(45,-3.5){\textcircled{$-$}}
\multiput(100,0)(75,0){3}{\circle*{10}}
\multiput(125,0)(75,0){2}{\circle*{10}}
\multiput(75,20)(150,0){2}{\circle*{10}}
\put(145,16.5){\textcircled{$-$}} \put(70,46.5){\textcircled{+}}
\put(150,50){\circle*{10}} \put(220,46.5){\textcircled{$-$}}

\put(70,3){$4k$} \put(145,3){$4l$} \put(220,3){$4m$}

\multiput(55,0)(75,0){3}{\line(1,0){45}}
\multiput(54,2)(75,0){3}{\line(5,4){18}}
\multiput(96,2)(75,0){3}{\line(-5,4){17.5}}
\multiput(75,24)(75,0){3}{\line(0,1){21}}
\end{picture}

\begin{picture}(300,70)
\multiput(50,0)(50,0){2}{\circle*{10}}
\multiput(120,-3.5)(50,0){2}{\textcircled{+}}
\multiput(200,0)(50,0){2}{\circle*{10}}
\multiput(70,16.5)(150,0){2}{\textcircled{+}}
\put(150,20){\circle*{10}} \put(75,50){\circle*{10}}
\put(145,46.5){\textcircled{+}} \put(220,46.5){\textcircled{$-$}}

\put(70,3){$4n$} \put(145,3){$4o$} \put(220,3){$4p$}

\multiput(54,2)(75,0){3}{\line(5,4){18}}
\multiput(96,2)(75,0){3}{\line(-5,4){17.5}}
\multiput(75,24)(75,0){3}{\line(0,1){21}}
\end{picture}

\begin{picture}(300,70)
\multiput(45,-3.5)(75,0){3}{\textcircled{+}}
\multiput(95,-3.5)(75,0){3}{\textcircled{+}}
\multiput(75,20)(150,0){2}{\circle*{10}}
\put(145,16.5){\textcircled{$-$}}
\multiput(75,50)(75,0){2}{\circle*{10}}
\put(220,46.5){\textcircled{$-$}}

\put(70,3){$4q$} \put(145,3){$4r$} \put(220,3){$4s$}

\multiput(54,2)(75,0){3}{\line(5,4){18}}
\multiput(96,2)(75,0){3}{\line(-5,4){17.5}}
\multiput(75,24)(75,0){3}{\line(0,1){21}}
\end{picture}

\begin{picture}(300,60)
\multiput(65,-3.5)(100,0){2}{\textcircled{$-$}}
\put(105,-3.5){\textcircled{+}} \put(130,0){\circle*{10}}
\multiput(190,0)(40,0){2}{\circle*{10}}
\multiput(65,36.5)(60,0){3}{\textcircled{+}}
\multiput(105,36.5)(120,0){2}{\textcircled{$-$}}
\put(170,40){\circle*{10}}

\put(85,15){$4t$} \put(145,15){$4u$} \put(205,15){$4v$}

\multiput(75,0)(60,0){3}{\line(1,0){30}}
\multiput(70,3.5)(60,0){3}{\line(0,1){31}}
\multiput(75,40)(60,0){3}{\line(1,0){30}}
\multiput(110,3.5)(60,0){3}{\line(0,1){31}}
\end{picture}

\begin{picture}(300,60)
\multiput(70,0)(60,0){3}{\circle*{10}}
\multiput(105,-3.5)(60,0){2}{\textcircled{+}}
\put(230,0){\circle*{10}}
\multiput(65,36.5)(60,0){3}{\textcircled{+}}
\multiput(110,40)(120,0){2}{\circle*{10}}
\put(165,36.5){\textcircled{$-$}}

\put(85,15){$4w$} \put(145,15){$4x$} \put(205,15){$4y$}

\multiput(75,0)(60,0){3}{\line(1,0){30}}
\multiput(70,3.5)(60,0){3}{\line(0,1){31}}
\multiput(75,40)(60,0){3}{\line(1,0){30}}
\multiput(110,3.5)(60,0){3}{\line(0,1){31}}
\end{picture}

\begin{picture}(300,60)
\put(70,0){\circle*{10}}
\multiput(105,-3.5)(20,0){2}{\textcircled{$-$}}
\put(165,-3.5){\textcircled{+}}
\multiput(190,0)(40,0){2}{\circle*{10}}
\multiput(65,36.5)(100,0){2}{\textcircled{$-$}}
\put(110,40){\circle*{10}} \put(125,36.5){\textcircled{+}}
\multiput(190,40)(40,0){2}{\circle*{10}}

\put(80,25){$4z$} \put(140,25){$4A$} \put(200,25){$4B$}

\multiput(75,0)(60,0){3}{\line(1,0){30}}
\multiput(70,3.5)(60,0){3}{\line(0,1){31}}
\multiput(75,40)(60,0){3}{\line(1,0){30}}
\multiput(110,3.5)(60,0){3}{\line(0,1){31}}
\multiput(73,2)(60,0){3}{\line(1,1){34}}
\end{picture}

\begin{picture}(300,60)
\multiput(70,0)(120,0){2}{\circle*{10}}
\multiput(110,0)(60,0){3}{\circle*{10}}
\put(125,-3.5){\textcircled{$-$}} \put(65,36.5){\textcircled{$-$}}
\multiput(110,40)(80,0){2}{\circle*{10}}
\put(125,36.5){\textcircled{+}}
\multiput(165,36.5)(60,0){2}{\textcircled{$-$}}

\put(80,25){$4C$} \put(140,25){$4D$} \put(200,25){$4E$}

\multiput(75,0)(60,0){3}{\line(1,0){30}}
\multiput(70,3.5)(60,0){3}{\line(0,1){31}}
\multiput(75,40)(60,0){3}{\line(1,0){30}}
\multiput(110,3.5)(60,0){3}{\line(0,1){31}}
\multiput(73,2)(60,0){3}{\line(1,1){34}}
\end{picture}

\begin{picture}(300,20)
\multiput(30,0)(60,0){2}{\circle*{10}}
\multiput(55,-3.5)(60,0){2}{\textcircled{+}}
\put(175,-3.5){\textcircled{$-$}}
\multiput(205,-3.5)(60,0){2}{\textcircled{+}}
\put(240,0){\circle*{10}}

\put(70,3){$4F$} \put(220,3){$4G$}

\multiput(35,0)(30,0){3}{\line(1,0){20}}
\multiput(185,0)(30,0){3}{\line(1,0){20}}
\end{picture}

\begin{picture}(300,20)
\multiput(30,0)(30,0){2}{\circle*{10}}
\put(85,-3.5){\textcircled{+}} \put(120,0){\circle*{10}}
\put(175,-3.5){\textcircled{$-$}}
\multiput(210,0)(60,0){2}{\circle*{10}}
\put(235,-3.5){\textcircled{+}}

\put(70,3){$4H$} \put(220,3){$4I$}

\multiput(35,0)(30,0){3}{\line(1,0){20}}
\multiput(185,0)(30,0){3}{\line(1,0){20}}
\end{picture}

\begin{picture}(300,100)
\put(0,65){$5$-vertex minimal noncyclotomic charged signed
graphs:}

\multiput(10,0)(50,0){2}{\circle*{10}}
\multiput(80,-3.5)(50,0){2}{\textcircled{+}}
\put(160,0){\circle*{10}} \put(205,-3.5){\textcircled{+}}
\multiput(235,0)(50,0){2}{\circle*{10}} \put(35,25){\circle*{10}}
\put(105,21.5){\textcircled{$-$}}
\multiput(185,25)(75,0){2}{\circle*{10}}
\multiput(10,50)(50,0){2}{\circle*{10}}
\multiput(80,46.5)(50,0){2}{\textcircled{+}}
\put(155,46.5){\textcircled{$-$}}
\multiput(210,50)(75,0){2}{\circle*{10}}
\put(230,46.5){\textcircled{+}}

\put(30,3){$5a$} \put(105,3){$5b$} \put(180,3){$5c$}
\put(255,3){$5d$}

\multiput(10,0)(0,50){2}{\line(1,0){50}}
\multiput(10,0)(50,0){2}{\line(0,1){50}}
\multiput(13,3)(75,0){4}{\line(1,1){19}}
\multiput(38,28)(75,0){4}{\line(1,1){19}}
\multiput(89,46)(75,0){3}{\line(1,-1){19}}
\multiput(114,21)(75,0){3}{\line(1,-1){19}}
\end{picture}

\begin{picture}(300,70)
\put(5,-3.5){\textcircled{$-$}} \put(60,0){\circle*{10}}
\multiput(80,-3.5)(50,0){2}{\textcircled{$-$}}
\put(155,-3.5){\textcircled{+}}
\multiput(205,-3.5)(75,0){2}{\textcircled{$-$}}
\multiput(35,25)(75,0){4}{\circle*{10}}
\multiput(10,50)(75,0){4}{\circle*{10}}
\multiput(55,46.5)(75,0){3}{\textcircled{$-$}}
\put(285,50){\circle*{10}} \put(235,0){\circle*{10}}

\put(30,3){$5e$} \put(105,3){$5f$} \put(180,3){$5g$}
\put(255,3){$5h$}

\multiput(10,4)(50,0){2}{\line(0,1){41}}
\multiput(85,4)(50,0){2}{\line(0,1){41}}
\multiput(210,4)(75,0){2}{\line(0,1){41}}
\multiput(13,3)(75,0){4}{\line(1,1){19}}
\multiput(38,28)(75,0){4}{\line(1,1){19}}
\multiput(14,46)(75,0){4}{\line(1,-1){19}}
\multiput(39,21)(75,0){4}{\line(1,-1){19}}
\end{picture}

\begin{picture}(300,100)

\multiput(25,-3.5)(160,0){2}{\textcircled{$-$}}
\put(110,0){\circle*{10}} \put(265,-3.5){\textcircled{+}}
\multiput(25,26.5)(80,0){3}{\textcircled{+}}
\put(270,30){\circle*{10}} \put(10,40){$5i$} \put(90,40){$5j$}
\put(170,40){$5k$} \put(250,40){$5l$}
\multiput(25,56.5)(80,0){2}{\textcircled{$-$}}
\multiput(190,60)(80,0){2}{\circle*{10}}
\multiput(-5,76.5)(60,0){2}{\textcircled{+}}
\multiput(75,76.5)(60,0){2}{\textcircled{+}}
\multiput(160,80)(60,0){2}{\circle*{10}}
\put(235,76.5){\textcircled{+}} \put(300,80){\circle*{10}}

\multiput(30,3.5)(80,0){4}{\line(0,1){21}}
\multiput(30,33.5)(80,0){4}{\line(0,1){21}}
\multiput(33.5,62.5)(80,0){4}{\line(3,2){22}}
\multiput(27.5,62.5)(80,0){4}{\line(-3,2){22}}
\end{picture}

\begin{picture}(300,100)
\multiput(25,-3.5)(80,0){2}{\textcircled{+}}
\put(190,0){\circle*{10}} \put(265,-3.5){\textcircled{$-$}}
\put(25,26.5){\textcircled{$-$}}
\multiput(110,30)(80,0){3}{\circle*{10}} \put(10,40){$5m$}
\put(90,40){$5n$} \put(170,40){$5o$} \put(250,40){$5p$}
\multiput(30,60)(80,0){4}{\circle*{10}}
\multiput(-5,76.5)(140,0){2}{\textcircled{+}}
\multiput(60,80)(160,0){2}{\circle*{10}}
\put(75,76.5){\textcircled{$-$}}
\multiput(155,76.5)(80,0){2}{\textcircled{+}}
\put(300,80){\circle*{10}}

\multiput(30,3.5)(80,0){4}{\line(0,1){21}}
\multiput(30,33.5)(80,0){4}{\line(0,1){21}}
\multiput(33.5,62.5)(80,0){4}{\line(3,2){22}}
\multiput(27.5,62.5)(80,0){4}{\line(-3,2){22}}
\end{picture}

\begin{picture}(300,100)
\multiput(25,-3.5)(160,0){2}{\textcircled{$-$}}
\put(110,0){\circle*{10}} \put(265,-3.5){\textcircled{+}}
\multiput(25,26.5)(80,0){3}{\textcircled{+}}
\put(270,30){\circle*{10}} \put(10,40){$5q$} \put(90,40){$5r$}
\put(170,40){$5s$} \put(250,40){$5t$}
\multiput(25,56.5)(80,0){2}{\textcircled{$-$}}
\multiput(190,60)(80,0){2}{\circle*{10}}
\multiput(0,80)(60,0){2}{\circle*{10}}
\multiput(80,80)(60,0){2}{\circle*{10}}
\multiput(155,76.5)(60,0){2}{\textcircled{$-$}}
\put(235,76.5){\textcircled{$-$}} \put(300,80){\circle*{10}}

\multiput(5,80)(80,0){4}{\line(1,0){50}}
\multiput(30,3.5)(80,0){4}{\line(0,1){21}}
\multiput(30,33.5)(80,0){4}{\line(0,1){21}}
\multiput(33.5,62.5)(80,0){4}{\line(3,2){22}}
\multiput(27.5,62.5)(80,0){4}{\line(-3,2){22}}
\end{picture}

\begin{picture}(300,100)
\put(30,0){\circle*{10}} \put(130,-3.5){\textcircled{$-$}}
\multiput(165,0)(30,0){3}{\circle*{10}} \put(155,10){$5x$}
\put(30,30){\circle*{10}} \put(10,40){$5u$}
\put(175,26.5){\textcircled{$-$}} \put(75,46.5){\textcircled{+}}
\put(105,46.5){\textcircled{$-$}}
\multiput(140,50)(30,0){2}{\circle*{10}}
\multiput(190,50)(30,0){4}{\circle*{10}} \put(30,60){\circle*{10}}
\put(100,60){$5v$} \put(210,60){$5w$}
\multiput(0,80)(125,0){2}{\circle*{10}}
\multiput(55,76.5)(175,0){2}{\textcircled{$-$}}

\multiput(140,0)(30,0){3}{\line(1,0){20}}
\multiput(85,50)(30,0){3}{\line(1,0){20}}
\multiput(195,50)(30,0){3}{\line(1,0){20}}
\put(5,80){\line(1,0){50}} \put(30,0){\line(0,1){60}}
\put(33.5,62.5){\line(3,2){22}} \put(26.5,62.5){\line(-3,2){22}}
\put(167.5,3.5){\line(1,2){11}}
\multiput(112.5,53.5)(110,0){2}{\line(1,2){11}}
\put(193,3.5){\line(-1,2){11}}
\multiput(138,53.5)(110,0){2}{\line(-1,2){11}}
\end{picture}

\begin{picture}(300,66)
\multiput(7,-3.5)(30,0){2}{\textcircled{$-$}}
\put(87,-3.5){\textcircled{+}} \put(122,0){\circle*{10}}
\multiput(167,-3.5)(110,0){2}{\textcircled{+}}
\multiput(197,-3.5)(50,0){2}{\textcircled{$-$}}
\multiput(-5,26.5)(54,0){2}{\textcircled{$-$}}
\multiput(75,26.5)(214,0){2}{\textcircled{$-$}}
\put(134,30){\circle*{10}}
\multiput(160,30)(54,0){2}{\circle*{10}}
\put(235,26.5){\textcircled{+}} \put(22,42.5){\textcircled{$-$}}
\put(102,42.5){\textcircled{+}}
\multiput(187,46)(80,0){2}{\circle*{10}}

\put(13,4){\line(1,3){12.5}} \put(41,4){\line(-1,3){12.5}}
\put(15,2){\line(5,4){35}} \put(39,2){\line(-5,4){35}}

\put(21,20){$5y$} \put(101,20){$5z$} \put(179,20){$5A$}
\put(259,20){$5B$} \multiput(10,3)(80,0){4}{\line(-2,5){8.5}}
\multiput(44,3)(80,0){4}{\line(2,5){8.5}}
\multiput(3,32)(80,0){4}{\line(5,3){19.5}}
\multiput(51,32)(80,0){4}{\line(-5,3){19.5}}
\end{picture}

\begin{picture}(300,50)
\multiput(30,0)(30,0){2}{\circle*{10}}
\multiput(80,0)(30,0){2}{\circle*{10}}
\put(135,-3.5){\textcircled{+}}
\multiput(160,0)(30,0){3}{\circle*{10}}
\put(235,-3.5){\textcircled{+}}
\multiput(270,0)(30,0){2}{\circle*{10}}
\put(-5,11.5){\textcircled{$-$}}
\multiput(30,30)(30,0){2}{\circle*{10}}
\multiput(80,30)(30,0){2}{\circle*{10}}
\multiput(160,30)(30,0){2}{\circle*{10}}
\put(235,26.5){\textcircled{$-$}} \put(270,30){\circle*{10}}

\put(30,0){\line(1,0){30}} \put(80,0){\line(1,0){55}}
\put(160,0){\line(1,0){60}} \put(245,0){\line(1,0){55}}
\multiput(30,30)(50,0){2}{\line(1,0){30}}
\put(160,30){\line(1,0){30}}
\multiput(247,30)(5,0){5}{\circle*{2}} \put(5,80){\line(1,0){44}}
\multiput(17,50)(80,0){4}{\line(1,0){20}}
\multiput(30,0)(30,0){2}{\line(0,1){30}}
\multiput(80,0)(30,0){2}{\line(0,1){30}}
\multiput(160,0)(30,0){2}{\line(0,1){30}}
\put(240,3.5){\line(0,1){21}} \put(270,0){\line(0,1){30}}
\put(4,12){\line(2,-1){25}} \put(4,17){\line(2,1){25}}

\put(40,13){$5C$} \put(90,13){$5D$} \put(170,13){$5E$}
\put(250,13){$5F$}
\end{picture}

\begin{picture}(300,25)
\multiput(0,0)(120,0){2}{\circle*{10}}
\multiput(25,-3.5)(60,0){2}{\textcircled{+}}
\put(55,-3.5){\textcircled{$-$}} \put(175,-3.5){\textcircled{$-$}}
\multiput(205,-3.5)(90,0){2}{\textcircled{+}}
\multiput(240,0)(30,0){2}{\circle*{10}}

\put(10,3){$5G$} \put(190,3){$5H$}

\multiput(5,0)(30,0){4}{\line(1,0){20}}
\multiput(185,0)(30,0){4}{\line(1,0){20}}
\end{picture}

\begin{picture}(300,25)
\multiput(0,0)(30,0){2}{\circle*{10}}
\multiput(55,-3.5)(60,0){2}{\textcircled{+}}
\put(85,-3.5){\textcircled{$-$}} \put(180,0){\circle*{10}}
\multiput(205,-3.5)(60,0){2}{\textcircled{+}}
\multiput(235,-3.5)(60,0){2}{\textcircled{$-$}}

\put(10,3){$5I$} \put(190,3){$5J$}

\multiput(5,0)(30,0){4}{\line(1,0){20}}
\multiput(185,0)(30,0){4}{\line(1,0){20}}
\end{picture}

\begin{picture}(300,25)
\multiput(-5,-3.5)(90,0){2}{\textcircled{$-$}}
\put(30,0){\circle*{10}}
\multiput(55,-3.5)(60,0){2}{\textcircled{+}}
\put(175,-3.5){\textcircled{+}} \put(205,-3.5){\textcircled{$-$}}
\multiput(240,0)(30,0){3}{\circle*{10}}

\put(10,3){$5K$} \put(190,3){$5L$}

\multiput(5,0)(30,0){4}{\line(1,0){20}}
\multiput(185,0)(30,0){4}{\line(1,0){20}}
\end{picture}

\begin{picture}(300,25)
\multiput(0,0)(30,0){2}{\circle*{10}}
\put(55,-3.5){\textcircled{+}} \put(85,-3.5){\textcircled{$-$}}
\put(120,0){\circle*{10}}
\multiput(175,-3.5)(120,0){2}{\textcircled{+}}
\put(205,-3.5){\textcircled{$-$}}
\multiput(240,0)(30,0){2}{\circle*{10}}

\put(10,3){$5M$} \put(190,3){$5N$}

\multiput(5,0)(30,0){4}{\line(1,0){20}}
\multiput(185,0)(30,0){4}{\line(1,0){20}}
\end{picture}

\begin{picture}(320,110)
\put(0,75){$6$-vertex minimal noncyclotomic charged signed
graphs:}

\multiput(-10,-3.5)(0,60){2}{\textcircled{+}}
\multiput(20,-3.5)(0,60){2}{\textcircled{$-$}}
\put(-25,26.5){\textcircled{$-$}} \put(35,26.5){\textcircled{+}}
\put(5,25){$6a$}

\multiput(70,-3.5)(30,60){2}{\textcircled{+}}
\multiput(100,-3.5)(-30,60){2}{\textcircled{$-$}}
\multiput(60,30)(60,0){2}{\circle*{10}} \put(85,25){$6b$}

\multiput(155,0)(30,0){2}{\circle*{10}}
\multiput(170,25)(0,30){2}{\circle*{10}}
\multiput(140,40)(60,0){2}{\circle*{10}} \put(145,15){$6c$}

\multiput(220,18)(0,30){2}{\circle*{10}}
\multiput(250,6)(0,54){2}{\circle*{10}}
\multiput(265,33)(30,0){2}{\circle*{10}} \put(235,30){$6d$}

\multiput(0,0)(80,0){2}{\line(1,0){20}}
\multiput(0,60)(80,0){2}{\line(1,0){20}}
\put(265,33){\line(1,0){30}}

\put(170,25){\line(0,1){35}} \put(220,18){\line(0,1){30}}
\put(140,40){\line(2,-1){30}} \put(170,25){\line(2,1){30}}
\put(155,0){\line(3,5){15}} \put(185,0){\line(-3,5){15}}

\put(220,48){\line(5,2){30}} \put(220,18){\line(5,-2){30}}
\put(250,60){\line(3,-5){15}} \put(250,6){\line(3,5){15}}

\multiput(-17.5,26)(80,0){2}{\line(1,-2){10.5}}
\multiput(28,56)(80,0){2}{\line(1,-2){10.5}}
\multiput(27,3)(80,0){2}{\line(1,2){11}}
\multiput(-18,33)(80,0){2}{\line(1,2){11}}
\end{picture}

\begin{picture}(300,91)

\put(-25,11.5){\textcircled{$-$}} \put(40,15){\circle*{10}}
\multiput(-5,45)(30,0){2}{\circle*{10}}
\put(-25,71.5){\textcircled{+}} \put(40,75){\circle*{10}}
\put(0,60){$6e$}

\multiput(55,11.5)(60,0){2}{\textcircled{$-$}}
\multiput(75,45)(30,0){2}{\circle*{10}} \put(60,75){\circle*{10}}
\put(115,71.5){\textcircled{$-$}} \put(90,60){$6f$}

\put(170,15){\circle*{10}}
\multiput(140,45)(30,0){3}{\circle*{10}}
\put(225,41.5){\textcircled{+}} \put(170,75){\circle*{10}}
\put(150,60){$6g$}

\put(260,0){\circle*{10}} \multiput(230,30)(30,0){4}{\circle*{10}}
\put(260,60){\circle*{10}} \put(270,45){$6h$}

\multiput(-18,19)(80,0){2}{\line(1,2){11}}
\multiput(27,48)(80,0){2}{\line(1,2){11}}

\multiput(-17.5,71)(80,0){2}{\line(1,-2){10.5}}
\multiput(26.5,41)(80,0){2}{\line(1,-2){10.5}}

\multiput(-5,45)(80,0){2}{\line(1,0){30}}
\put(140,45){\line(1,0){85}} \put(230,30){\line(1,0){90}}
\multiput(60,18.5)(60,0){2}{\line(0,1){51.5}}
\put(170,15){\line(0,1){60}} \put(260,0){\line(0,1){60}}
\end{picture}

\begin{picture}(320,130)

\multiput(170,-3.5)(50,0){3}{\textcircled{+}}
\multiput(195,-3.5)(50,0){3}{\textcircled{$-$}} \put(182,3){$6n$}
\put(10,-3.5){\textcircled{$-$}} \put(85,0){\circle*{10}}
\multiput(15,30)(70,0){5}{\circle*{10}}
\multiput(15,60)(70,0){5}{\circle*{10}}
\multiput(130,80)(70,0){3}{\circle*{10}}
\multiput(180,80)(70,0){3}{\circle*{10}}
\multiput(15,90)(70,0){2}{\circle*{10}}
\multiput(-15,106.5)(70,0){5}{\textcircled{+}}
\multiput(35,106.5)(70,0){2}{\textcircled{$-$}}
\put(175,106.5){\textcircled{+}}
\put(245,106.5){\textcircled{$-$}} \put(320,110){\circle*{10}}

\put(-2,80){$6i$} \put(68,80){$6j$} \put(150,80){$6k$}
\put(220,80){$6l$} \put(290,80){$6m$}

\multiput(180,0)(25,0){5}{\line(1,0){15}}
\put(15,3.5){\line(0,1){86.5}} \put(85,0){\line(0,1){90}}
\multiput(130,80)(70,0){3}{\line(0,1){24.5}}
\multiput(155,30)(70,0){3}{\line(0,1){30}}
\multiput(180,80)(70,0){3}{\line(0,1){24.5}}
\multiput(15,90)(70,0){2}{\line(5,4){22}}
\multiput(155,60)(70,0){3}{\line(5,4){25}}
\multiput(15,90)(70,0){2}{\line(-5,4){22}}
\multiput(155,60)(70,0){3}{\line(-5,4){25}}
\end{picture}

\begin{picture}(300,30)

\multiput(-5,-3.5)(75,0){2}{\textcircled{+}}
\multiput(20,-3.5)(75,0){2}{\textcircled{$-$}}
\multiput(50,0)(75,0){2}{\circle*{10}} \put(7,3){$6o$}

\multiput(170,-3.5)(75,0){2}{\textcircled{$-$}}
\multiput(200,0)(75,0){2}{\circle*{10}}
\multiput(220,-3.5)(75,0){2}{\textcircled{+}} \put(182,3){$6p$}

\multiput(5,0)(25,0){5}{\line(1,0){15}}
\multiput(180,0)(25,0){5}{\line(1,0){15}}
\end{picture}

\begin{picture}(300,130)
\put(0,102){$7$-vertex minimal noncyclotomic charged signed
graphs:}

\put(45,-3.5){\textcircled{+}}
\multiput(80,0)(30,0){5}{\circle*{10}} \put(140,30){\circle*{10}}
\put(120,10){$7c$}

\multiput(65,30)(30,0){2}{\circle*{10}}
\multiput(50,60)(60,0){2}{\circle*{10}} \put(140,60){\circle*{10}}
\multiput(65,90)(30,0){2}{\circle*{10}} \put(75,55){$7a$}

\multiput(190,30)(60,0){2}{\circle*{10}}
\multiput(205,60)(30,0){2}{\circle*{10}}
\multiput(160,90)(30,0){2}{\circle*{10}}
\put(250,90){\circle*{10}} \put(215,70){$7b$}

\put(55,0){\line(1,0){145}}
\multiput(110,60)(95,0){2}{\line(1,0){30}}
\multiput(65,30)(0,60){2}{\line(1,0){30}}
\put(160,90){\line(1,0){30}} \put(140,0){\line(0,1){30}}
\multiput(50,60)(45,-30){2}{\line(1,2){15}}
\multiput(190,30)(45,30){2}{\line(1,2){15}}
\multiput(65,30)(45,30){2}{\line(-1,2){15}}
\multiput(205,60)(45,-30){2}{\line(-1,2){15}}
\end{picture}

\begin{picture}(300,100)
\put(0,75){$8$-vertex minimal noncyclotomic charged signed
graphs:}

\multiput(120,0)(30,0){6}{\circle*{10}}
\multiput(180,30)(0,30){2}{\circle*{10}} \put(157,12){$8b$}

\multiput(40,0)(25,5){2}{\circle*{10}}
\multiput(22,18)(0,24){2}{\circle*{10}}
\multiput(75,30)(25,0){2}{\circle*{10}}
\multiput(40,60)(25,-5){2}{\circle*{10}} \put(45,25){$8a$}

\put(120,0){\line(1,0){150}} \put(75,30){\line(1,0){25}}
\put(22,18){\line(0,1){24}} \put(180,0){\line(0,1){60}}
\put(22,42){\line(1,1){15}} \put(40,60){\line(5,-1){25}}
\put(65,55){\line(2,-5){10}} \put(75,30){\line(-2,-5){10}}
\put(65,5){\line(-5,-1){25}} \put(40,0){\line(-1,1){15}}
\end{picture}

\begin{picture}(300,100)
\put(25,-3.5){\textcircled{+}}
\multiput(60,0)(30,0){6}{\circle*{10}} \put(150,30){\circle*{10}}
\put(127,12){$8d$}

\multiput(30,50)(30,0){6}{\circle*{10}}
\multiput(90,80)(60,0){2}{\circle*{10}} \put(67,62){$8c$}

\put(35,0){\line(1,0){175}} \put(30,50){\line(1,0){150}}
\put(150,0){\line(0,1){30}}
\multiput(90,50)(60,0){2}{\line(0,1){30}}
\end{picture}

\begin{picture}(300,100)
\put(0,75){$9$-vertex minimal noncyclotomic charged signed
graphs:}

\multiput(70,30)(60,0){2}{\circle*{10}}
\multiput(80,10)(0,40){2}{\circle*{10}}
\multiput(100,0)(0,60){2}{\circle*{10}}
\multiput(120,10)(0,40){2}{\circle*{10}}
\put(155,30){\circle*{10}} \put(95,25){$9a$}

\put(130,30){\line(1,0){25}}

\multiput(70,30)(50,-20){2}{\line(1,2){10}}
\multiput(80,50)(20,-50){2}{\line(2,1){20}}
\multiput(100,60)(-20,-50){2}{\line(2,-1){20}}
\multiput(120,50)(-50,-20){2}{\line(1,-2){10}}
\end{picture}

\begin{picture}(300,100)
\multiput(40,0)(30,0){7}{\circle*{10}}
\multiput(130,30)(30,0){2}{\circle*{10}} \put(107,12){$9c$}

\multiput(40,50)(30,0){7}{\circle*{10}}
\multiput(100,80)(90,0){2}{\circle*{10}} \put(77,62){$9b$}

\multiput(40,0)(0,50){2}{\line(1,0){180}}
\multiput(130,30)(5,0){6}{\circle*{2}}
\multiput(130,0)(30,0){2}{\line(0,1){30}}
\multiput(100,50)(90,0){2}{\line(0,1){30}}
\end{picture}

\begin{picture}(300,100)
\put(35,-3.5){\textcircled{+}}
\multiput(70,0)(30,0){7}{\circle*{10}} \put(190,30){\circle*{10}}
\put(167,12){$9e$}

\multiput(40,50)(30,0){8}{\circle*{10}} \put(160,80){\circle*{10}}
\put(137,62){$9d$}

\put(45,0){\line(1,0){205}} \put(40,50){\line(1,0){210}}
\multiput(190,0)(-30,50){2}{\line(0,1){30}}
\end{picture}

\begin{picture}(300,120)
\put(0,95){$10$-vertex minimal noncyclotomic charged signed
graphs:}

\multiput(30,0)(30,0){5}{\circle*{10}}
\multiput(90,30)(30,0){5}{\circle*{10}}
\put(30,0){\line(1,0){120}} \put(90,30){\line(1,0){120}}
\multiput(90,0)(60,0){2}{\line(0,1){30}}
\multiput(120,0)(0,5){6}{\circle*{2}} \put(97,12){$10b$}

\multiput(30,50)(30,0){8}{\circle*{10}}
\multiput(90,80)(120,0){2}{\circle*{10}}
\put(30,50){\line(1,0){210}}
\multiput(90,50)(120,0){2}{\line(0,1){30}} \put(97,62){$10a$}
\end{picture}

\begin{picture}(300,100)
\multiput(30,0)(30,0){7}{\circle*{10}}
\multiput(30,30)(30,0){3}{\circle*{10}}
\put(30,0){\line(1,0){180}} \put(30,30){\line(1,0){30}}
\multiput(60,30)(5,0){6}{\circle*{2}}
\multiput(60,0)(30,0){2}{\line(0,1){30}} \put(97,12){$10d$}

\put(25,46.5){\textcircled{+}}
\multiput(60,50)(30,0){8}{\circle*{10}} \put(210,80){\circle*{10}}
\put(35,50){\line(1,0){235}} \put(210,50){\line(0,1){30}}
\put(97,62){$10c$}
\end{picture}

\begin{picture}(300,100)
\multiput(30,0)(30,0){9}{\circle*{10}} \put(90,30){\circle*{10}}
\put(97,12){$10f$} \put(30,0){\line(1,0){240}}
\put(90,0){\line(0,1){30}}

\multiput(30,50)(30,0){8}{\circle*{10}}
\multiput(60,80)(30,0){2}{\circle*{10}}
\put(30,50){\line(1,0){210}} \multiput(60,80)(5,0){6}{\circle*{2}}
\multiput(60,50)(30,0){2}{\line(0,1){30}} \put(97,62){$10e$}
\end{picture}

\setcounter{LTchunksize}{155}
\begin{center}
\begin{longtable}{r|c|c|c|c}
 \caption[]{Spectral radius and Mahler measure of the minimal noncyclotomic charged signed graphs.
 The `outside' column gives the number of eigenvalues outside the interval $[-2,2]$.}\\
\# & Name  & Spectral radius & Outside & Mahler measure  \\
 \hline
  \endfirsthead
  \caption[]{(continued) Spectral radius and Mahler measure of the minimal noncyclotomic
   charged signed graphs.}\\
  \# & Name  & Spectral radius & Outside & Mahler measure  \\
  \hline
  \endhead
  \endfoot 
  \endlastfoot

                 1 & $3a$ & 3.00000 & 1 &  2.61803\\
                  2 & $3b$ & 2.73205 & 1 & 2.29663\\
                  3 & $3c$ & 2.56155 & 1 & 2.08102\\
                  4 & $3d$ & 2.41421 & 1 & 1.88320\\
                  5 & $3e$ & 2.21432 & 1 & 1.58235\\
                  6 & $3f$ & 2.41421 & 1 & 1.88320\\
                  7 & $3g$ & 2.24698 & 1 & 1.63557\\
                  8 & $3h$ & 2.17009 & 1 & 1.50614\\
                  9 & $4a$ & 3.00000 & 1 & 2.61803\\
                 10 & $4b$ & 2.23607 & 2 & 2.61803\\
                 11 & $4c$ & 2.79129 & 1 & 2.36921\\
                 12 & $4d$ & 2.73205 & 1 & 2.29663\\
                 13 & $4e$ & 2.56155 & 1 & 2.08102\\
                 14 & $4f$ & 2.30278 & 1 & 1.72208\\
                 15 & $4g$ & 2.30278 & 1 & 1.72208\\
                 16 & $4h$ & 2.30278 & 1 & 1.72208\\
                 17 & $4i$ & 2.21432 & 1 & 1.58235\\
                 18 & $4j$ & 2.17009 & 1 & 1.50614\\
                 19 & $4k$ & 2.14386 & 1 & 1.45799\\
                 20 & $4l$ & 2.11491 & 1 & 1.40127\\
                 21 & $4m$ & 2.11491 & 1 & 1.40127\\
                 22 & $4n$ & 2.30278 & 1 & 1.72208\\
                 23 & $4o$ & 2.30278 & 1 & 1.72208\\
                 24 & $4p$ & 2.21432 & 1 & 1.58235\\
                 25 & $4q$ & 2.17009 & 1 & 1.50614\\
                 26 & $4r$ & 2.11491 & 1 & 1.40127\\
                 27 & $4s$ & 2.11491 & 1 & 1.40127\\
                 28 & $4t$ & 2.23607 & 2 & 2.61803\\
                 29 & $4u$ & 2.23607 & 2 & 2.61803\\
                 30 & $4v$ & 2.18890 & 2 & 2.36921\\
                 31 & $4w$ & 2.56155 & 1 & 2.08102\\
                 32 & $4x$ & 2.41421 & 1 & 1.88320\\
                 33 & $4y$ & 2.34292 & 1 & 1.78164\\
                 34 & $4z$ & 2.23607 & 2 & 2.61803\\
                 35 & $4A$ & 2.56155 & 1 & 2.08102\\
                 36 & $4B$ & 2.56155 & 1 & 2.08102\\
                 37 & $4C$ & 2.41421 & 1 & 1.88320\\
                 38 & $4D$ & 2.34292 & 1 & 1.78164\\
                 39 & $4E$ & 2.30278 & 1 & 1.72208\\
                 40 & $4F$ & 2.19353 & 1 &  1.54720\\
                 41 & $4G$ & 2.12676 & 1 & 1.42501\\
                 42 & $4H$ & 2.09529 & 1 & 1.36000\\
                 43 & $4I$ & 2.06150 & 1 & 1.28064\\
                 44 & $5a$ & 2.44949 & 2 & 3.73205\\
                 45 & $5b$ & 2.23607 & 2 & 2.61803\\
                 46 & $5c$ & 2.13578 & 2 & 2.08102\\
                 47 & $5d$ & 2.19869 & 1 & 1.55603\\
                 48 & $5e$ & 2.30278 & 1 & 1.72208\\
                 49 & $5f$ & 2.17009 & 1 & 1.50614\\
                 50 & $5g$ & 2.19869 & 1 & 1.55603\\
                 51 & $5h$ & 2.17009 & 1 & 1.50614\\
                 52 & $5i$ & 2.10100 & 2 & 1.88320\\
                 53 & $5j$ & 2.15976 & 2 & 1.84752\\
                 54 & $5k$ & 2.14386 & 1 & 1.45799\\
                 55 & $5l$ & 2.13883 & 1 & 1.44842\\
                 56 & $5m$ & 2.09529 & 1 & 1.36000\\
                 57 & $5n$ & 2.09118 & 1 & 1.35098\\
                 58 & $5o$ & 2.06150 & 1 & 1.28064\\
                 59 & $5p$ & 2.03850 & 1 & 1.21639\\
                 60 & $5q$ & 2.10100 & 2 & 1.88320\\
                 61 & $5r$ & 2.15976 & 2 & 1.84752\\
                 62 & $5s$ & 2.14386 & 1 & 1.45799\\
                 63 & $5t$ & 2.11491 & 1 & 1.40127\\
                 64 & $5u$ & 2.02642 & 1 & 1.17628\\
                 65 & $5v$ & 2.13797 & 2 & 1.83505\\
                 66 & $5w$ & 2.11491 & 1 & 1.40127\\
                 67 & $5x$ & 2.06150 & 1 & 1.28064\\
                 68 & $5y$ & 3.00000 & 1 & 2.61803\\
                 69 & $5z$ & 2.34292 & 1 & 1.78164\\
                 70 & $5A$ & 2.17009 & 1 & 1.50614\\
                 71 & $5B$ & 2.22833 & 1 & 1.60545\\
                 72 & $5C$ & 2.34292 & 1 & 1.78164\\
                 73 & $5D$ & 2.25619 & 2 & 2.22371\\
                 74 & $5E$ & 2.13578 & 2 & 2.08102\\
                 75 & $5F$ & 2.05411 & 1 & 1.26123\\
                 76 & $5G$ & 2.11491 & 1 & 1.40127\\
                 77 & $5H$ & 2.10637 & 1 & 1.38364\\
                 78 & $5I$ & 2.08508 & 1 & 1.33731\\
                 79 & $5J$ & 2.06659 & 1 & 1.29349\\
                 80 & $5K$ & 2.05411 & 1 & 1.26123\\
                 81 & $5L$ & 2.04314 & 1 & 1.23039\\
                 82 & $5M$ & 2.03850 & 1 & 1.21639\\
                 83 & $5N$ & 2.02642 & 1 & 1.17628\\
                 84 & $6a$ & 2.23607 & 2 & 2.61803\\
                 85 & $6b$ & 2.21432 & 2 & 2.50382\\
                 86 & $6c$ & 2.23607 & 2 & 2.61803\\
                 87 & $6d$ & 2.11491 & 1 & 1.40127\\
                 88 & $6e$ & 2.10100 & 2 & 1.88320\\
                 89 & $6f$ & 2.12676 & 1 & 1.42501\\
                 90 & $6g$ & 2.15976 & 2 & 1.84752\\
                 91 & $6h$ & 2.07431 & 2 & 1.72208\\
                 92 & $6i$ & 2.07852 & 2 & 1.50646\\
                 93 & $6j$ & 2.02852 & 2 & 1.40127\\
                 94 & $6k$ & 2.11491 & 1 & 1.40127\\
                 95 & $6l$ & 2.02852 & 2 & 1.40127\\
                 96 & $6m$ & 2.04671 & 1 & 1.24073\\
                 97 & $6n$ & 2.06082 & 2 & 1.63557\\
                 98 & $6o$ & 2.07103 & 2 & 1.57837\\
                 99 & $6p$ & 2.04907 & 2 & 1.55603\\
                 100 & $7a$ & 2.10100 & 2 & 1.88320\\
                 101 & $7b$ & 2.05288 & 2 & 1.58235\\
                 102 & $7c$ & 2.03850 & 1 & 1.21639\\
                 103 & $8a$ & 2.09118 & 1 & 1.35098\\
                 104 & $8b$ & 2.02852 & 2 & 1.40127\\
                 105 & $8c$ & 2.04208 & 2 & 1.50614\\
                 106 & $8d$ & 2.03334 & 1 & 1.20003\\
                 107 & $9a$ & 2.08397 & 2 & 1.78164\\
                 108 & $9b$ & 2.03565 & 2 & 1.45799\\
                 109 & $9c$ & 2.02368 & 2 & 1.36000\\
                 110 & $9d$ & 2.01532 & 2 & 1.28064\\
                 111 & $9e$ & 2.02986 & 1 & 1.18837\\
                112 & $10a$ & 2.03144 & 2 & 1.42501\\
                113 & $10b$ & 2.02642 & 2 & 1.38364\\
                114 & $10c$ & 2.02739 & 2 & 1.25364\\
                115 & $10d$ & 2.01348 & 2 & 1.26123\\
                116 & $10e$ & 2.00960 & 2 & 1.21639\\
                117 & $10f$ & 2.00659 & 2 & 1.17628\\
\end{longtable}\label{Ta:minimal}
\end{center}

\section{Charged signed graphs of small spectral radius}\label{S:SSR}

Any noncyclotomic charged signed graph of spectral radius less
than $2.019$ must, by interlacing, contain as a subgraph a minimal
noncyclotomic charged signed graph of spectral radius less than
$2.019$. Thus the former can be `grown' from the latter by
successively adding a vertex of charge $-1$, $0$ or $1$, and
adjoining it in all possible ways to the vertices of the current
graph. Furthermore, we claim that we can assume that the vertex
adjoined is of degree at most $4$.

For suppose that the added vertex,  $v$ say,   is of degree at
least $5$ and that the resulting graph $G$ is of spectral radius
less than $2.019$. Consider the two cases

\begin{itemize}
\item[(i)] $v$ charged.

 Consider the  subgraph $G_5$ of
$G$ on $v$ and four of its neighbours. As no maximal  cyclotomic
graph contains a charged vertex of degree $5$, $G_5$ cannot be
cyclotomic. It therefore contains a minimal noncyclotomic subgraph
with at most $5$ vertices and spectral radius less than
$2.019$. However, from our results, there is no minimal
noncyclotomic graph of spectral radius less than $2.019$ with
fewer than $9$ vertices.

\item[(ii)] $v$ neutral.

 Consider the subgraph $G_6$ of $G$ on $v$ and five of its
neighbours. As no maximal  cyclotomic graph contains a  vertex of
degree $5$, $G_6$ cannot be cyclotomic. It therefore contains a
minimal noncyclotomic subgraph with at most $6$ vertices, and
we obtain the same contradiction.
\end{itemize}
The results of this growing procedure are shown in the pictures
below.  Together with the minimal examples of small spectral
radius, $9d$, $10d$, $10e$, $10f$, we produce Table \ref{Ta-1},
and establish Theorem \ref{T:spectral radius}.  It turns out that
all these charged signed graphs are in fact simply signed graphs.
Several are starlike trees $T_{a,b,c}$, as in Figure $1$.

\begin{picture}(300,135)
\put(0,105){Connected $10$-vertex nonminimal noncyclotomic
(charged) signed graphs} \put(-22,92){with spectral radius
$<2.019$:}

\multiput(90,0)(30,0){6}{\circle*{10}}
\multiput(60,30)(30,0){4}{\circle*{10}} \put(157,12){$10h$}
\put(90,0){\line(1,0){150}} \put(60,30){\line(1,0){60}}
\multiput(120,30)(5,0){6}{\circle*{2}}
\multiput(120,0)(30,0){2}{\line(0,1){30}}

\multiput(60,50)(30,0){7}{\circle*{10}}
\multiput(60,80)(30,0){3}{\circle*{10}} \put(157,62){$10g$}
\put(60,50){\line(1,0){180}} \put(60,80){\line(1,0){60}}
\multiput(60,50)(60,0){2}{\line(0,1){30}}
\multiput(90,50)(0,5){6}{\circle*{2}}

\end{picture}

\begin{picture}(300,100)
\put(0,85){Connected $11$-vertex nonminimal noncyclotomic
(charged) signed graphs} \put(-22,72){with spectral radius
$<2.019$:}

\multiput(60,0)(30,0){7}{\circle*{10}}
\multiput(60,30)(30,0){3}{\circle*{10}} \put(90,60){\circle*{10}}
\put(157,12){$11a$} \put(60,0){\line(1,0){180}}
\put(60,30){\line(1,0){60}}
\multiput(60,0)(60,0){2}{\line(0,1){30}}
\put(90,30){\line(0,1){30}} \multiput(90,0)(0,5){6}{\circle*{2}}

\end{picture}

\begin{picture}(300,100)
\multiput(60,0)(30,0){10}{\circle*{10}} \put(120,30){\circle*{10}}
\put(157,12){$11c$} \put(60,0){\line(1,0){270}}
\put(120,0){\line(0,1){30}}

\multiput(60,50)(30,0){9}{\circle*{10}}
\multiput(90,80)(30,0){2}{\circle*{10}} \put(157,62){$11b$}
\put(60,50){\line(1,0){240}} \multiput(90,80)(5,0){6}{\circle*{2}}
\multiput(90,50)(30,0){2}{\line(0,1){30}}
\end{picture}

\begin{picture}(300,130)
\put(0,105){Connected $12$-vertex nonminimal noncyclotomic
(charged) signed graphs} \put(-22,92){with spectral radius
$<2.019$:}

\multiput(30,50)(30,0){10}{\circle*{10}}
\multiput(60,80)(30,0){2}{\circle*{10}} \put(127,62){$12a$}
\put(30,50){\line(1,0){270}} \multiput(60,80)(5,0){6}{\circle*{2}}
\multiput(60,50)(30,0){2}{\line(0,1){30}}

\multiput(30,0)(30,0){11}{\circle*{10}} \put(90,30){\circle*{10}}
\put(127,12){$12b$} \put(30,0){\line(1,0){300}}
\put(90,0){\line(0,1){30}}

\end{picture}

\begin{picture}(300,85)
\put(0,60){Connected $13$- to $18$-vertex nonminimal noncyclotomic
(charged) signed} \put(-22,47){graphs with spectral radius
$<2.019$:}

\multiput(60,0)(30,0){4}{\circle*{10}} \put(170,-5){$\cdots$}
\put(200,0){\circle*{10}} \put(120,30){\circle*{10}}
\put(60,0){\line(1,0){100}} \put(190,0){\line(1,0){10}}
\put(120,0){\line(0,1){30}}
\put(150,10){$\overbrace{\hspace{50pt}}$} \put(158,20){$9$, \dots,
$14$} \put(220,5){$13a$, $14a$, \dots, $18a$}
\end{picture}

\section{Charged signed graphs of small Mahler measure}\label{S:SMM}

Any noncyclotomic charged signed graph of Mahler measure less than
$1.3$ must, by interlacing, contain as a subgraph a minimal
noncyclotomic charged signed graph of Mahler measure less than
$1.3$. Thus the former can be grown from the latter. Again we
claim that we can assume that the vertex adjoined is of degree at
most $4$.

For suppose that $v$  is of degree at least $5$ and that the
resulting graph $G$ is of Mahler measure less than $1.3$. Again,
consider the two cases

\begin{itemize}
\item[(i)] $v$ charged.

Consider the  subgraph $G_5$ of $G$ on $v$ and four of its
neighbours. As no maximal cyclotomic graph contains a charged
vertex of degree $5$, $G_5$ cannot be cyclotomic. It therefore
contains a minimal noncyclotomic subgraph. However, from our
results, $5b$ and $5y$ are the only minimal noncyclotomic graphs
containing a vertex of degree $5$, and their Mahler measures are
all greater than $1.3$. Hence $G_5$ is not minimal noncyclotomic.
It therefore contains a minimal noncyclotomic subgraph with at
most four vertices, and Mahler measure less than $1.3$. From our
results, the only one is $4I$, which, having no vertices of degree
$4$, must be  the subgraph $G_5-\{v\}$. We now check by computer
that when $v$ is adjoined to the four vertices of $4I$ with all
$16$ possible choices of edge signs then in each case the
resulting graph has Mahler measure greater than $1.3$.

\item[(ii)] $v$ neutral.

Consider the subgraph $G_6$ of $G$ on $v$ and five of its
neighbours. Again, $G_6$ cannot be cyclotomic, and so contains a
minimal noncyclotomic subgraph. Now $G_6$ itself is not minimal,
as no $6$-vertex minimal graph of Mahler measure less than $1.3$
contains a vertex of degree greater than $3$. Suppose that $G_6$
has a minimal noncyclotomic subgraph containing $v$. It can then
have at most four vertices, so must be $4I$. But $4I$ has no
neutral vertex of degree $3$. Hence no minimal noncyclotomic
subgraph of $G_6$ contains $v$. We now check by computer that when
$v$ is adjoined to $4I$ as above (and also to another vertex if
necessary, which might itself be adjacent to vertices of $4I$), or
to the five vertices of the ten minimal $5$-vertex noncyclotomic
subgraphs of Mahler measure less than $1.3$ with all $2^5$
possible choices of edge signs, then in each case the resulting
graph has Mahler measure greater than $1.3$.

\end{itemize}

The new charged signed graphs found by this growing procedure are
shown below.  Together with the minimal examples of small Mahler
measure, $4I$, $5o$, $5p$, $5u$, $5x$, $5F$, $5J$, $5K$, $5L$,
$5M$, $5N$, $6m$, $7c$, $8d$, $9d$, $9e$, $10c$, $10d$, $10e$,
$10f$, and the nonminimal examples seen when considering small
spectral radius, $10g$, $10h$, $11a$, $11b$, $11c$, $12b$, $13a$,
$14a$, we produce Table \ref{Ta-2}, and establish Theorem
\ref{T:measure}. All the new examples include at least one charged
vertex.

\begin{picture}(300,90)
\put(0,75){Connected $5$- and $6$-vertex nonminimal noncyclotomic
charged signed} \put(-22,62){graphs with Mahler measure $<1.3$:}

\multiput(60,0)(30,0){2}{\circle*{10}}
\put(115,-3.5){\textcircled{$-$}}
\multiput(210,0)(30,0){3}{\circle*{10}}
\multiput(60,30)(120,0){2}{\circle*{10}}
\multiput(85,26.5)(120,0){2}{\textcircled{+}}
\put(235,26.5){\textcircled{$-$}} \put(70,12){$5O$}
\put(220,12){$6q$} \put(60,0){\line(1,0){55}}
\multiput(210,0)(5,0){6}{\circle*{2}} \put(240,0){\line(1,0){30}}
\multiput(60,0)(0,5){6}{\circle*{2}} \put(90,0){\line(0,1){26.5}}
\multiput(210,0)(30,0){2}{\line(0,1){24.5}}
\put(60,30){\line(1,0){25}} \put(180,30){\line(1,0){25}}
\put(215,30){\line(1,0){20}} \put(180,30){\line(1,-1){30}}

\end{picture}

\begin{picture}(300,20)

\put(55,-3.5){\textcircled{+}} \put(85,-3.5){\textcircled{$-$}}
\multiput(120,0)(30,0){4}{\circle*{10}} \put(130,2){$6r$}
\put(65,0){\line(1,0){20}} \put(95,0){\line(1,0){115}}

\end{picture}

\begin{picture}(300,20)
\multiput(55,-3.5)(120,0){2}{\textcircled{+}}
\multiput(90,0)(30,0){2}{\circle*{10}}
\put(145,-3.5){\textcircled{$-$}} \put(210,0){\circle*{10}}
\put(130,2){$6s$} \put(65,0){\line(1,0){80}}
\put(155,0){\line(1,0){20}} \put(185,0){\line(1,0){25}}

\end{picture}

\begin{picture}(300,20)

\multiput(55,-3.5)(150,0){2}{\textcircled{+}}
\put(85,-3.5){\textcircled{$-$}}
\multiput(120,0)(30,0){3}{\circle*{10}} \put(130,2){$6t$}
\put(65,0){\line(1,0){20}} \put(95,0){\line(1,0){110}}

\end{picture}

\begin{picture}(300,50)

\put(60,0){\circle*{10}} \put(145,-3.5){\textcircled{$-$}}
\put(265,-3.5){\textcircled{$-$}}
\multiput(180,0)(30,0){3}{\circle*{10}}
\put(55,26.5){\textcircled{$-$}}
\multiput(90,30)(30,0){4}{\circle*{10}} \put(270,30){\circle*{10}}
\put(130,32){$6u$} \put(190,2){$6v$} \put(60,0){\line(0,1){24.5}}
\put(60,0){\line(1,1){30}} \put(155,0){\line(1,0){110}}
\put(240,0){\line(1,1){30}} \put(270,5){\line(0,1){25}}
\put(65,30){\line(1,0){115}}

\end{picture}

\begin{picture}(300,105)
\put(0,75){Connected $7$-vertex nonminimal noncyclotomic charged
signed graphs} \put(90,62){with Mahler measure $<1.3$:}

\multiput(100,20)(30,0){2}{\circle*{10}}
\put(35,46.5){\textcircled{+}}
\multiput(70,50)(30,0){4}{\circle*{10}} \put(110,30){$7d$}
\multiput(100,20)(5,0){6}{\circle*{2}}
\multiput(100,20)(30,0){2}{\line(0,1){30}}
\put(45,50){\line(1,0){115}}

\multiput(70,0)(30,0){5}{\circle*{10}}
\put(215,-3.5){\textcircled{+}} \put(160,30){\circle*{10}}
\put(170,2){$7e$} \put(70,0){\line(1,0){145}}
\put(160,0){\line(0,1){30}}
\end{picture}

\begin{picture}(300,20)
\multiput(35,-3.5)(180,0){2}{\textcircled{+}}
\put(65,-3.5){\textcircled{$-$}}
\multiput(100,0)(30,0){4}{\circle*{10}} \put(140,2){$7f$}
\put(45,0){\line(1,0){20}} \put(75,0){\line(1,0){140}}
\end{picture}

\begin{picture}(300,50)
\put(40,0){\circle*{10}} \put(35,26.5){\textcircled{$-$}}
\multiput(70,30)(30,0){5}{\circle*{10}} \put(140,33){$7g$}
\put(40,0){\line(0,1){24.5}} \put(40,0){\line(1,1){30}}
\put(45,30){\line(1,0){145}}

\put(95,-3.5){\textcircled{$-$}}
\multiput(130,0)(30,0){4}{\circle*{10}}
\put(245,-3.5){\textcircled{$-$}} \put(250,30){\circle*{10}}
\put(140,2){$7h$} \put(105,0){\line(1,0){140}}
\put(220,0){\line(1,1){30}} \put(250,4.5){\line(0,1){25}}

\end{picture}

\begin{picture}(300,85)
\put(0,55){Connected $8$-vertex nonminimal noncyclotomic charged
signed graphs} \put(-22,42){with Mahler measure $<1.3$:}

\multiput(150,0)(30,0){2}{\circle*{10}}
\put(55,26.5){\textcircled{+}}
\multiput(90,30)(30,0){5}{\circle*{10}} \put(160,10){$8e$}
\multiput(150,0)(5,0){6}{\circle*{2}}
\multiput(150,0)(30,0){2}{\line(0,1){30}}
\put(65,30){\line(1,0){145}}
\end{picture}

\begin{picture}(300,50)

\multiput(120,0)(30,0){3}{\circle*{10}}
\put(55,26.5){\textcircled{+}}
\multiput(90,30)(30,0){4}{\circle*{10}} \put(130,10){$8f$}
\multiput(120,0)(5,0){6}{\circle*{2}} \put(150,0){\line(1,0){30}}
\multiput(120,0)(30,0){2}{\line(0,1){30}}
\put(65,30){\line(1,0){115}}
\end{picture}

\begin{picture}(300,50)
\multiput(120,0)(30,0){3}{\circle*{10}}
\put(55,26.5){\textcircled{+}}
\multiput(90,30)(30,0){4}{\circle*{10}} \put(130,10){$8g$}
\put(120,0){\line(1,0){60}}
\multiput(120,0)(60,0){2}{\line(0,1){30}}
\multiput(150,0)(0,5){6}{\circle*{2}} \put(65,30){\line(1,0){115}}
\end{picture}

\begin{picture}(300,50)
\put(150,0){\circle*{10}} \put(55,26.5){\textcircled{+}}
\multiput(90,30)(30,0){6}{\circle*{10}} \put(160,10){$8h$}
\put(150,0){\line(0,1){30}} \put(65,30){\line(1,0){175}}
\end{picture}

\begin{picture}(300,85)
\put(0,55){Connected $9$-vertex nonminimal noncyclotomic charged
signed graphs} \put(-22,42){with Mahler measure $<1.3$:}

\multiput(90,0)(30,0){4}{\circle*{10}}
\put(25,26.5){\textcircled{+}}
\multiput(60,30)(30,0){4}{\circle*{10}} \put(100,10){$9f$}
\put(90,0){\line(1,0){90}} \put(90,0){\line(0,1){30}}
\multiput(120,0)(0,5){6}{\circle*{2}} \put(35,30){\line(1,0){115}}
\end{picture}

\begin{picture}(300,50)
\multiput(150,0)(30,0){2}{\circle*{10}}
\put(25,26.5){\textcircled{+}}
\multiput(60,30)(30,0){6}{\circle*{10}} \put(160,10){$9g$}
\multiput(150,0)(5,0){6}{\circle*{2}}
\multiput(150,0)(30,0){2}{\line(0,1){30}}
\put(35,30){\line(1,0){175}}
\end{picture}

\begin{picture}(300,50)
\multiput(120,0)(30,0){3}{\circle*{10}}
\put(25,26.5){\textcircled{+}}
\multiput(60,30)(30,0){5}{\circle*{10}} \put(130,10){$9h$}
\put(120,0){\line(1,0){60}} \put(120,0){\line(0,1){30}}
\multiput(150,0)(0,5){6}{\circle*{2}} \put(35,30){\line(1,0){145}}
\end{picture}

\begin{picture}(300,50)
\multiput(120,0)(30,0){3}{\circle*{10}}
\put(25,26.5){\textcircled{+}}
\multiput(60,30)(30,0){5}{\circle*{10}} \put(130,10){$9i$}
\put(120,0){\line(1,0){60}}
\multiput(120,0)(60,0){2}{\line(0,1){30}}
\multiput(150,0)(0,5){6}{\circle*{2}} \put(35,30){\line(1,0){145}}
\end{picture}

\begin{picture}(300,80)
\put(25,-3.5){\textcircled{+}}
\multiput(60,0)(30,0){4}{\circle*{10}}
\multiput(90,30)(30,0){3}{\circle*{10}} \put(120,60){\circle*{10}}
\put(100,10){$9j$} \put(35,0){\line(1,0){115}}
\put(90,30){\line(1,0){60}}
\multiput(90,0)(60,0){2}{\line(0,1){30}}
\multiput(120,0)(0,5){6}{\circle*{2}} \put(120,30){\line(0,1){30}}
\end{picture}

\begin{picture}(300,145)(0,-10)
\put(0,105){Connected $10$- and $11$-vertex nonminimal
noncyclotomic charged signed graphs} \put(-22,92){with Mahler
measure $<1.3$ other than $10g$, $10h$, $11a$, $11b$, $11c$:}

\put(25,76.5){\textcircled{+}}
\multiput(60,80)(30,0){5}{\circle*{10}}
\multiput(120,50)(30,0){3}{\circle*{10}}
\put(150,20){\circle*{10}} \put(130,60){$10i$}
\put(120,50){\line(1,0){60}} \put(35,80){\line(1,0){145}}
\multiput(120,50)(60,0){2}{\line(0,1){0}}
\multiput(150,50)(0,5){6}{\circle*{2}}
\put(150,20){\line(0,1){30}}

\multiput(120,50)(60,0){2}{\line(0,1){30}}

\put(25,-3.5){\textcircled{+}}
\multiput(60,0)(30,0){9}{\circle*{10}} \put(240,30){\circle*{10}}
\put(217,10){$11d$} \put(35,0){\line(1,0){265}}
\put(240,0){\line(0,1){30}}
\end{picture}

{\bf Acknowledgement.} We thank Klas Markstr\"om for pointing us
to the reference \cite{HR}.

\

\

\end{document}